\documentclass{amsart}
\newcommand{\vare}{\varepsilon}

\newcommand{\lgra}{\longrightarrow}

\newcommand{\R}{\mathbb R}%
\newcommand{\T}{\mathbb T}%
\newcommand{\wit}{\widetilde}

\newtheorem{defn}{Definition}[section]
\newtheorem{thm}[defn]{Theorem}
\newtheorem{lem}[defn]{Lemma}
\newtheorem{prop}[defn]{Proposition}
\newtheorem{cor}[defn]{Corollary}
\newtheorem{ex}[defn]{Example}
\newtheorem{rem}[defn]{Remark}

\begin{document}
\title{PARTIAL ISOMETRIES OF A SUB-RIEMANNIAN MANIFOLD}

\author{MAHUYA DATTA}

\address{Statistics and Mathematics Unit, Indian Statistical Institute\\ 203,
B.T. Road, Calcutta 700108, India.\\ e-mail:
mahuya@isical.ac.in}

\maketitle

\begin{abstract} In this article, we obtain the following generalisation of isometric $C^1$-immersion theorem of Nash and Kuiper.
Let $M$ be a smooth manifold of dimension $m$ and $H$ a rank $k$ subbundle of the tangent bundle $TM$ with a Riemannian metric $g_H$. Then the pair $(H,g_H)$ defines a sub-Riemannian structure on $M$. We call a $C^1$-map $f:(M,H,g_H)\to (N,h)$ into a Riemannian manifold $(N,h)$ a {\em partial isometry\/} if the derivative map $df$ restricted to $H$ is isometric, that is if $f^*h|_H=g_H$. We prove that if $f_0:M\to N$ is a smooth map such that $df_0|_H$ is a bundle monomorphism and $f_0^*h|_H<g_H$, then $f_0$ can be homotoped to a $C^1$-map $f:M\to N$ which is a partial isometry, provided $\dim N>k$. As a consequence of this result, we obtain that every sub-Riemannian manifold $(M,H,g_H)$ admits a partial isometry in $\R^n$, provided $n\geq m+k$.\\
\mbox{}\\
Key words: Sub-Riemannian manifold, partial isometry, convex integration.\\
\mbox{}\\
Mathematics Subject Classification 2000: 53C17, 58J99.\end{abstract}

\section{Introduction}

Let $(M,g)$ be a Riemannian manifold and $f_0:M\to \R^n$ be a $C^\infty$ map such that $f_0^*h<g$ (that is, $g-f_0^*h$ is positive definite), where $h$ is the canonical metric on the Euclidean space $\R^n$. 
Nash proved in \cite{nash} that if $f_0$ is an immersion (respectively an embedding) then $f_0$ can be homotoped to an isometric immersion (respectively embedding) $f:M\to \R^n$ so that $f^*h=g$, provided $n\geq\dim M+2$. He further observed that a closed manifold $M$ that immerses (respectively embeds) in $\R^n$ also does so isometrically under the same dimension restriction. Shortly after this, Kuiper \cite{kuiper} proved that these results are  true even when $n\geq \dim M+1$. By Whitney's Immersion Theorem it is known that every manifold $M$ of dimension $m$ admits an immersion in $\R^{2m}$ and therefore, it admits an isometric $C^1$ immersion by Nash-Kuiper theorem. Isometric immersions $f:(M,g)\to (N,h)$ into any Riemannian manifold $(N,h)$ of dimension $n$ can be (locally) seen as solutions to a system of $m(m+1)/2$ equations in $n$ variables, which is clearly overdetermined when $n<m(m+1)/2$. Therefore, for sufficiently large $m$, the system remains overdetermined for $n\geq2m$. A remarkable aspect of Nash-Kuiper theorem is in showing that a overdetermined system may not only be solvable but the solution space can be `very large'.

In this paper, we obtain a generalisation of the Nash-Kuiper isometric $C^1$-immersion theorem which comes in response to certain observations of Gromov in \cite[2.4.9(B)]{gromov}. Let $M$ be a smooth manifold of dimension $m$ and $H$ a rank $k$ subbundle of the tangent bundle $TM$ with a Riemannian metric $g_H$. Then the pair $(H,g_H)$ defines a sub-Riemannian structure on $M$ \cite{bellaiche}. We call a $C^1$-map $f:(M,H,g_H)\to (N,h)$ into a Riemannian manifold $(N,h)$ a {\em partial isometry\/} if $df|_H$ is isometric, that is, if $f^*h|_H=g_H$. In the special situation, when $H$ is an integrable distribution, we obtain a regular foliation $\mathcal F$ on $M$ such that $T\mathcal F=H$. The leaves of this foliation, being integral submanifolds of $H$, inherit Riemannian structures from the metric $g_H$ on $H$. Therefore, a partial isometry in this case can be viewed as a $C^1$ map which restricts to an isometric immersion on each leaf of the foliation $\mathcal F$. 

Partial isometries are also related to Carnot-Caratheodory geometry underlying the sub-Riemannian structure $(M,H)$. 
Let $d_H$ denote the Carnot Caratheodory metric on $M$ associated with the subbundle $H$ of $TM$. Then for any two points $x,y$ of $M$, $d_H(x,y)=\infty$ if there is no $H$-horizontal path in $M$ connencting these points. Otherwise $d_H(x,y)$ is the infimum of the lengths of all $H$-horizontal paths between $x$ and $y$.  
Recall that a piecewise smooth path $\gamma:I\to M$ is called $H$-horizontal if the tangent vectors $\dot{\gamma}(t)$ lies in $H$ for all those $t\in I$ where the path is differentiable. 
Observe that a partial isometry preserves the norm of any vector in $H$, and hence preserves the lengths of $H$-horizontal paths in $M$. Thus, if $f:(M,g_H)\to (N,h)$ is a partial isometry then $f:(M,d_H)\to (N,d_h)$ is necessarily a \textit{path-isometry}, where $d_h$ is the intrinsic metric on $N$ defined by $h$.

The main result of the paper may be stated as follows:
\begin{thm} Let $M$ be a manifold with a sub-Riemannian structure $(H,g_H)$ defined as above and $f_0:M\to N$ be a $C^\infty$ map into a Riamannian manifold $(N,h)$ satisfying the following conditions:\begin{enumerate}
\item[$(i)$] The restriction of $df_0$ to the bundle $H$ is a monomorphism and 
\item[$(ii)$] $g_H-f_0^*h|_H$ is positive definite on $H$.\end{enumerate}
If $\dim N>$ {\em rank\,}$H$ then $f_0$ can be homotoped to a partial isometry $f:(M,g_H)\to (N,h)$. Furthermore, the homotopy can be made to lie in a given neighbourhood of $f_0$ in the fine $C^0$ topology. 
\label{main}\end{thm}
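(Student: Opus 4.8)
\emph{Proof strategy (plan).} The plan is to adapt the Nash--Kuiper construction, in its convex integration form, to the subbundle $H$. We would build a sequence of $C^\infty$ maps $f_0,f_1,f_2,\dots\colon M\to N$, Cauchy in the fine $C^1$ topology, in which $f_{q+1}$ is a high-frequency perturbation (a \emph{corrugation}) of $f_q$, arranged so that at each stage $df_q|_H$ is a monomorphism, $g_H-f_q^*h|_H$ is positive definite, and $f_q^*h|_H$ increases towards $g_H$. The limit $f=\lim_q f_q$ is then $C^1$ with $f^*h|_H=g_H$; as $g_H$ is positive definite this forces $df|_H$ to be a monomorphism, so $f$ is a partial isometry. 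Keeping $\sum_q\|f_{q+1}-f_q\|_{C^0}$ finite and small, and localising the perturbations near infinity, would place $f$ --- and the homotopy obtained by concatenating the elementary homotopies from $f_q$ to $f_{q+1}$ --- inside any prescribed fine $C^0$-neighbourhood of $f_0$.

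\emph{Step 1: decomposing the defect.} First I would fix a locally finite cover of $M$ by charts over which $H$ is trivial, with a subordinate partition of unity. Over a chart with coordinates $x_1,\dots,x_m$ the forms $dx_i|_H$ span $H^*$ at each point; extending the positive definite form $g_H-f_0^*h|_H$ on $H$ to a positive definite form on $TM$ over the chart and applying the standard decomposition of a smoothly varying positive definite quadratic form into squares of a fixed finite family of linear forms (Nash's ``short'' lemma), one writes $g_H-f_0^*h|_H$ over that chart as $\sum_\alpha a_\alpha^2\,(d\psi_\alpha|_H)^2$ with $a_\alpha\in C^\infty$, $a_\alpha\ge0$, and each $\psi_\alpha$ linear in the coordinates. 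Patching by the partition of unity gives a locally finite primitive decomposition $g_H-f_0^*h|_H=\sum_{j\ge1}a_j^2\,(d\psi_j|_H)^2$ with $a_j,\psi_j\in C^\infty(M)$, $a_j\ge0$. The point --- and where non-integrability of $H$ is absorbed --- is that the corrugations below will be driven by the \emph{scalar phases} $\psi_j$, not by a frame of $H$, so no Frobenius-type adapted coordinates are needed.

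\emph{Step 2: one corrugation, and where $\dim N>\mathrm{rank}\,H$ is used.} The elementary move is: given a $C^\infty$ $f$ with $df|_H$ a monomorphism and $g_H-f^*h|_H$ positive definite, a phase $\psi$, a small $b\in C^\infty(M)$ with $b\ge0$, and $\vare>0$, produce $\tilde f$ with $\|\tilde f-f\|_{C^0}<\vare$, with $\tilde f^*h|_H$ within $\vare$ of $f^*h|_H+b^2(d\psi|_H)^2$ in $C^0$ (and pointwise not exceeding $f^*h|_H+2b^2(d\psi|_H)^2$ up to $O(1/\lambda)$), with $\|d\tilde f-df\|_{C^0}=O(b)$, and with $d\tilde f|_H$ still a monomorphism. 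I would take $\tilde f(x)=\exp_{f(x)}\!\left(\tfrac1\lambda\,\Gamma(x,\lambda\psi(x))\right)$, $\Gamma(x,s)=\tfrac{\sqrt2\,b(x)}{2\pi}\sin(2\pi s)\,e(x)$, where $e$ is a smooth unit section of the orthogonal complement of $df(H)$ inside $f^*TN$. That complement has positive rank exactly because $\dim N>\mathrm{rank}\,H$; choosing $e$ there makes the cross terms $\langle df(X),\partial_s\Gamma\rangle$ vanish, so the corrugation adds no ``off-diagonal'' distortion to $f^*h|_H$, and keeps $d\tilde f(X)=df(X)+(d\psi|_H)(X)\,\partial_s\Gamma+O(1/\lambda)$ injective on $H$ for \emph{every} phase value. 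Since $\Gamma(x,\cdot)$ is $1$-periodic with $\int_0^1\partial_s\Gamma\,ds=0$ and $\int_0^1|\partial_s\Gamma|^2\,ds=b^2$, the standard $C^0$ estimate for the metric induced by a fast oscillation gives $\tilde f^*h|_H=f^*h|_H+b^2(d\psi|_H)^2+O(1/\lambda)$, and one takes $\lambda$ large. In convex integration terms, the fibre of the partial-isometry relation over the remaining $1$-jet datum is a round sphere in $T_yN$ cut by an affine subspace of codimension at most $\mathrm{rank}\,H-1$, hence a sphere of dimension $\ge\dim N-\mathrm{rank}\,H\ge1$, which is ample relative to the subsolution.

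\emph{Step 3: assembly, and the main obstacle.} Finally I would run this in stages: in stage $q$, split the current defect $g_H-f_q^*h|_H$ (positive definite) by the partition of unity into finitely many primitives $b^2(d\psi|_H)^2$ whose successive partial sums, with the factor-$2$ overshoot of each corrugation included, stay strictly below $g_H-f_q^*h|_H$, and add them one by one via Step 2, choosing each frequency $\lambda$ large enough to dominate the errors already accumulated; at the end $g_H-f_{q+1}^*h|_H$ is positive definite and, say, $\le2^{-q}$. The hard part, exactly as in Nash--Kuiper, is the quantitative bookkeeping that yields $C^1$ convergence: each corrugation costs $O(b)$ in $C^1$ norm, so one must spread the defect over enough substeps that the total $\sum b$ is finite, giving $\sum_q\|f_{q+1}-f_q\|_{C^1}<\infty$; the $C^2$ norms blow up, so $C^1$ is all that survives. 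One also verifies that the transverse direction $e$ exists at every substep because $df_q|_H$ stays a monomorphism (the induced metric on $H$ only increases, modulo controlled errors, from the positive definite $f_0^*h|_H$), and that the supports can be shrunk towards infinity so the non-compact case and the fine $C^0$ statement come out together. The limit $f$ is the partial isometry sought, and the reparametrised concatenation of the homotopies $t\mapsto\exp_{f(x)}(\tfrac t\lambda\Gamma(x,\lambda\psi(x)))$ joins $f_0$ to $f$ inside the given neighbourhood.
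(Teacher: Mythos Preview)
Your overall architecture matches the paper's: decompose the defect into rank-one pieces $\phi^2(d\psi|_H)^2$ (the paper's Lemma~2.2), perform one elementary step per piece (the paper's Main Lemma~4.1), assemble these into a stage that shrinks the defect by a fixed factor (Lemma~4.3), and iterate to a $C^1$-Cauchy sequence. The difference is only in how the elementary step is carried out: the paper invokes Gromov's $h$-Stability Theorem (Theorem~3.3) as a black box, whereas you write down an explicit corrugation.

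That explicit formula is where the gap lies. With a single normal direction $e$ and the profile $\Gamma(x,s)=\tfrac{\sqrt2\,b}{2\pi}\sin(2\pi s)\,e$, one gets $d\tilde f(X)=df(X)+\sqrt2\,b\cos(2\pi\lambda\psi)\,(d\psi|_H)(X)\,e+O(1/\lambda)$, hence
\[
\tilde f^*h|_H \;=\; f^*h|_H \;+\; 2b^2\cos^2(2\pi\lambda\psi)\,(d\psi|_H)^2 \;+\; O(1/\lambda),
\]
which oscillates between $f^*h|_H$ and $f^*h|_H+2b^2(d\psi|_H)^2$ and is \emph{not} $C^0$-close to $f^*h|_H+b^2(d\psi|_H)^2$; your own parenthetical about the factor-$2$ overshoot already signals this. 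It is fatal for Step~3: after adding all the pieces the new defect lies somewhere between $0$ and the old defect, but at the phase values where every $\cos^2$ vanishes it has not decreased at all, so the claim $g_H-f_{q+1}^*h|_H\le 2^{-q}$ does not follow. The paper avoids exactly this by using convex integration: the $h$-Stability Theorem produces $\tilde f$ whose $1$-jet lands in an arbitrarily thin neighbourhood of the relation $\mathcal I=\{\alpha:\alpha^*h=g\text{ on }H\}$, so $\tilde f^*h|_H$ is genuinely $C^0$-close to $g$, not merely close on average. Concretely, convex integration makes $d\tilde f(\mathbf v_x)$ trace a loop \emph{on} the sphere $S_x$ whose barycentre is $df(\mathbf v_x)$, rather than oscillating along a chord through $df(\mathbf v_x)$. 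With only one extra dimension (the borderline case $\dim N=\operatorname{rank}H+1$ allowed by the hypothesis) this forces a tangential component in the corrugation---the genuine Kuiper ``strain''---and not just $\sin(2\pi s)\,e$; a Nash spiral would need two normals and hence $\dim N\ge\operatorname{rank}H+2$. Replace your formula by one whose derivative stays on (a neighbourhood of) $S_x$, or invoke the $h$-Stability Theorem as the paper does; then your Steps~1 and~3 go through essentially as written. (A minor further point: a global unit section $e$ of $(df(H))^\perp$ need not exist when that bundle has rank one, but since each corrugation is supported in a chart this is harmless.)
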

If we take $H=TM$ in Theorem~\ref{main} then we obtain the Nash-Kuiper isometric $C^1$-immersion theorem. 
Taking $N$ to be an Euclidean space we prove the existence of partial isometries. 
\begin{cor} Every sub-Riemannian manifold $(M,H,g_H)$ admits a partial isometry in $\R^n$ provided $n\geq \dim M+${\em rank }$H$. \label{partial isometry}\end{cor}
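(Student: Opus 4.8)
The plan is to derive Corollary~\ref{partial isometry} from Theorem~\ref{main}, applied with $N=\R^n$ and $h$ the Euclidean metric; the task is then to exhibit a $C^\infty$ map $f_0:M\to\R^n$ satisfying hypotheses $(i)$ and $(ii)$ there. Observe first that $\dim N=n\geq m+k>k=\mathrm{rank}\,H$ whenever $\dim M=m\geq 1$ (the case $m=0$ is vacuous: then $H=0$, $g_H=0$, and every $C^1$ map is a partial isometry), so the dimension hypothesis of Theorem~\ref{main} holds automatically. Moreover, postcomposing a map with the isometric linear inclusion $\iota:\R^{m+k}\hookrightarrow\R^n$ changes neither the bundle-monomorphism property of its derivative on $H$ nor the pulled-back tensor on $H$; hence it is enough to construct a smooth $f_0':M\to\R^{m+k}$ with $df_0'|_H$ a bundle monomorphism and $g_H-(f_0')^*h|_H>0$ on $H$.

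First I would extend $g_H$ to a Riemannian metric $\bar g$ on $TM$ (by a partition of unity, so that $\bar g|_H=g_H$), and then use the standard fact that every Riemannian manifold admits a strictly short $C^\infty$ embedding into a Euclidean space of twice its dimension: there is an embedding $e:M\to\R^{2m}$ with $e^*h<\bar g$, and in particular $e^*h|_H<g_H$. Since $e$ is an immersion, $de_x$ maps $H_x$ injectively into $\R^{2m}$, with image a $k$-plane $W_x:=de_x(H_x)$ that varies smoothly with $x$.

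The key step is to choose a suitable linear projection $P:\R^{2m}\to\R^{m+k}$ and set $f_0':=P\circ e$. For $df_0'|_H=P\circ de|_H$ to be fibrewise injective it suffices that $\ker P$, an $(m-k)$-plane, meet every $W_x$ only in $0$. For each fixed $x$, the set of $(m-k)$-planes $L$ with $L\cap W_x\neq 0$ has codimension $2m-(m-k)-k+1=m+1$ in $\mathrm{Gr}(m-k,2m)$; as $x$ ranges over the $m$-dimensional $M$, the union of these ``bad'' loci, being the image of the incidence set $\{(x,L):L\cap W_x\neq 0\}$, has dimension at most $m+(\dim\mathrm{Gr}(m-k,2m)-(m+1))=\dim\mathrm{Gr}(m-k,2m)-1$, hence measure zero. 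Picking $L$ outside it and letting $P$ be the orthogonal projection along $L$, one checks: $df_0'|_H$ is a bundle monomorphism (if $P\,de_x(v)=0$ then $de_x(v)\in L\cap W_x=0$, so $v=0$), and, since orthogonal projection is norm-nonincreasing, $(f_0')^*h|_H(v,v)=|P\,de_x(v)|^2\leq|de_x(v)|^2=(e^*h)(v,v)<g_H(v,v)$ for every $0\neq v\in H_x$. Hence $f_0:=\iota\circ f_0'$ satisfies $(i)$ and $(ii)$, and Theorem~\ref{main} homotopes it to a partial isometry $f:(M,g_H)\to(\R^n,h)$.

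The substantive ingredients are the Grassmannian dimension count --- this is exactly where the hypothesis $n\geq m+k$ is used, and with fewer target dimensions the ``bad'' set of projections could fill the whole parameter space, so that no global monomorphism need survive the projection --- and, when $M$ is non-compact, the construction of the strictly short embedding $e$: a single constant rescaling of a Whitney embedding no longer works, and one must instead rescale by a positive function tending to $0$ at infinity, organised along a compact exhaustion of $M$ so that the induced metric stays below $\bar g$ on each shell. The latter is the familiar device behind the non-compact version of the Nash--Kuiper theorem, and I expect it to be the main technical hurdle.
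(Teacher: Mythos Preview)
Your proposal is correct and follows essentially the same route as the paper: reduce to Theorem~\ref{main} by producing a $g_H$-short $H$-immersion into $\R^{m+k}$, obtained from a short immersion into a higher-dimensional Euclidean space by projecting along a generic subspace, using a Sard-type dimension count together with the fact that orthogonal projection is norm-nonincreasing. The only differences are cosmetic---the paper projects one codimension at a time onto a generic hyperplane rather than all at once via the Grassmannian, and it cites Nash for the initial short map in large $\R^N$ rather than describing the compact-exhaustion rescaling---but the substance is the same.
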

We also discuss several other consequences of Theorem~\ref{main} in Corollaries \ref{integrable subbundle} and ~\ref{trivial subbundle}. 

We use the convex integration technique (see \cite{gromov}, \cite{eliash}) to prove the main theorem of this paper. It would be appropriate to mention here that Gromov developed the convex integration theory on the foundation of Kuiper's technique \cite{kuiper} and applied this theory to solve many interesting problems which appear in the context of geometry. 

We organise the paper as follows. In Section 2, we outline the proof of Theorem~\ref{main}, and in Section 3 we briefly discuss the convex integration technique following the beatiful exposition of Eliashberg and Mishachev \cite{eliash}.  In Section 4 we prove the main results of the paper stated above and in Section 5 we discuss some applications of Theorem~\ref{main}.
  
\section{Sketch of the proof}
Let $(N,h)$ and $(M,H,g_H)$ be as in Section 1 and let $g_0$ be a fixed Riemannian metric on $M$ such that $g_0|_H=g_H$. 
\begin{defn} A $C^1$ map from $M$ to $N$ is called an $H$-\textit{immersion} if its derivative restricts to a monomorphism on $H$ (We have borroed this terminology from \cite{dambra-loi}). 

A $C^1$ map $f_0:M\to N$ is said to be $g_H$-\textit{short} if $g_H-f_0^*h$ restricted to $H$ is positive definite. We use the notation $f_0^*h|_H<g_H$ to express $g_H$-shortness of $f_0$. \end{defn}

It is easy to see that if $f$ is an $H$-immersion then $f^*h|_H$ is a Riemannian metric on $H$ and conversely. Also note that if $f$ is a partial isometry then it is necessarily an $H$-immersion. 

We now introduce two real-valued functions on $M$. The first function will measure the `norm' of a bilinear form on $H$ relative to a Riemannian metric on $H$. The second function will measure the `distance' between two $C^1$-functions relative to Riemannian metrics on $M$ and $N$.

Let $g$ be a Riemannian metric on $H$. For any bilinear form $\bar{g}$ we define a function
$n_g(\bar{g}):M\to \R$, as follows:

$$n_g(\bar{g})(x)=\sup_{v\in H_x\setminus 0}\frac{|\bar{g}_x(v,v)|}{g_x(v,v)}$$

Given any Riemannian metric $\tilde{g}$ on $M$ and any two $C^1$ maps $f, \bar{f}:M\to N$, define a function $d_{\tilde{g}}(f,\bar{f}):M\to \R$ by

$$d_{\tilde{g}}(f,\bar{f})(x)=\sup_{v\in T_xM\setminus 0} \frac{\|df_x(v)-d\bar{f}_x(v)\|_h}{\|v\|_{\tilde{g}}}$$
To simplify notations we shall denote $n_{g_H}$ by $n$ and $d_{g_0}$ by $d$. \\

We now outline the proof of Theorem~\ref{main}. We start with an $f_0$ as in the hypothesis of the theorem. First note that with the newly introduced terminology the hypothesis on $f_0$ reads as follows: (i) $f_0$ is an $H$-immersion, and (ii) $f_0$ is $g_H$-short. (The assumption of $g_H$-shortness is not necessary if $M$ is a closed manifold and $N$ is an Euclidean space: For, by multiplying a given $H$-immersion by a suitable positive scalar $\lambda$ we can make it $g_H$-short.) The shortness condition means that $g_H-f_0^*h$ is a Riemannian metric on $H$. As in the proof of isometric $C^1$-immersion theorem in \cite{nash}, we need a suitable decomposition of $g_H-f_0^*h|_H$ on $H$. 

\begin{lem} Let $\{U_\lambda|\lambda\in\Lambda\}$ be an open covering of the manifold $M$ such that (a) each $U_\lambda$ is a coordinate neighbourhood in $M$ and (b) for any $\lambda_0$, $U_{\lambda_0}$ intersects at most $c_1(m)$ many $U_\lambda$'s
including itself. Then  $g_H-f_0^*h|_H$ admits a decomposition as follows:
\[g_H-f_0^*h|_H=\sum_{i=1}^\infty\phi_i^2 (d\psi_i)^2|_H,\] where 
$\phi_i$ and $\psi_i$, for $i=1,2,\dots$, are $C^{\infty}$ functions on $M$ such that 
\begin{enumerate}\item[$(i)$] for each $i$ there exists a $\lambda\in\Lambda$ for which $supp\,\phi_i$ is contained in $U_\lambda$ and $d\psi_i$ is a rank 1 quadratic form on $U_\lambda$. 
\item[$(ii)$] for all but finitely many $i$, $\phi_i$ vanishes on an $U_\lambda$ and 
\item[$(iii)$] there are at most $c(m)$ many $\phi_i$'s which are non-vanshing at any point $x$.\end{enumerate}
Here $c(m)$ and $c_1(m)$ are positive integers depending on $m=\dim M$. \label{decomposition}\end{lem}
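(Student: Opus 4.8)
The plan is to reduce the lemma to the classical decomposition of Riemannian \emph{metrics} used by Nash \cite{nash} and Kuiper \cite{kuiper} (see also \cite{eliash}); the only genuinely new feature is that $g_H-f_0^*h|_H$ lives on the subbundle $H$ rather than on $TM$, and this I would handle by extending it chart by chart to an honest Riemannian metric, decomposing the extension, and restricting the decomposition back to $H$. Before that I would do the usual bookkeeping: the hypothesis that each $U_{\lambda_0}$ meets at most $c_1(m)$ of the $U_\lambda$ makes $\{U_\lambda\}$ locally finite, so there are open sets $V_\lambda\subset W_\lambda$ with $\overline{V_\lambda}\subset W_\lambda$, $\overline{W_\lambda}\subset U_\lambda$ and $\{V_\lambda\}$ still covering $M$, together with a partition of unity of the form $\sum_\lambda\theta_\lambda^2=1$ with $\theta_\lambda\in C^\infty(M)$ and $\operatorname{supp}\theta_\lambda\subset W_\lambda$ (take $\theta_\lambda=\beta_\lambda/(\sum_\mu\beta_\mu^2)^{1/2}$ for bump functions $\beta_\lambda\ge 0$ positive on $\overline{V_\lambda}$ with support in $W_\lambda$; the denominator is positive and smooth). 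Since $\theta_\lambda$ vanishes to infinite order off its support, multiplying by $\theta_\lambda^2$ turns anything smooth on $\overline{W_\lambda}$ into a global smooth object supported in $W_\lambda$; this is how the $\phi_i$ will be produced.

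The core is the decomposition over a single chart. Over $U_\lambda$, with coordinates $x=(x^1,\dots,x^m)$, choose a smooth complement $E_\lambda$ of $H$ in $TM|_{U_\lambda}$ (say the $g_0$-orthogonal complement) and set $\widehat{Q}_\lambda:=(g_H-f_0^*h|_H)\oplus(g_0|_{E_\lambda})$ with respect to $TM|_{U_\lambda}=H\oplus E_\lambda$; since $g_H-f_0^*h|_H$ is positive definite on $H$, this is a $C^\infty$ Riemannian metric on $U_\lambda$ with $\widehat{Q}_\lambda|_H=g_H-f_0^*h|_H$. I would then invoke the linear-algebra fact behind the Nash--Kuiper lemma: the open cone $\mathcal{C}$ of positive definite symmetric bilinear forms on $\R^m$ lies in the interior of the convex cone generated by the rank-one forms $\ell\otimes\ell$, $\ell\in(\R^m)^*\setminus 0$, and for each $P_0\in\mathcal{C}$ there are $N=m(m+1)/2$ such $\ell_s$ in general position with $P_0=\sum_{s=1}^N a^0_s\,\ell_s\otimes\ell_s$, $a^0_s>0$; since $\{\ell_s\otimes\ell_s\}$ is then a basis, the coefficients $a_s(P)$ are linear in $P$, hence smooth and still positive for $P$ near $P_0$. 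The compact set $K_\lambda:=\{(\widehat{Q}_\lambda)_x:x\in\overline{W_\lambda}\}\subset\mathcal{C}$ is covered by finitely many such neighbourhoods, which we may assume to have multiplicity at most $N+1$; with a subordinate partition of unity $\{\mu_t=\nu_t^2\}$ on a neighbourhood of $K_\lambda$ one obtains nonconstant affine functions $\psi_{\lambda,t,s}$ on $U_\lambda$ (the $\ell_s$ of the $t$-th neighbourhood, pulled back via the chart) and, on $U_\lambda$,
\[
\theta_\lambda^2\,\widehat{Q}_\lambda=\sum_{t,s}\Big(\theta_\lambda\,\nu_t(\widehat{Q}_\lambda)\sqrt{a^{(t)}_s(\widehat{Q}_\lambda)}\Big)^2(d\psi_{\lambda,t,s})^2,
\]
all coefficient functions being smooth on $U_\lambda$ and supported in $W_\lambda$ (the square roots are smooth because $a^{(t)}_s>0$ on the relevant neighbourhood and $\mu_t=\nu_t^2$, and $\theta_\lambda$ absorbs the behaviour near $\partial W_\lambda$). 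Restricting this identity to $H$, summing over $\lambda$, and using $\sum_\lambda\theta_\lambda^2=1$ and $\widehat{Q}_\lambda|_H=g_H-f_0^*h|_H$ yields $g_H-f_0^*h|_H=\sum_{\lambda,t,s}\phi_{\lambda,t,s}^2\,(d\psi_{\lambda,t,s})^2|_H$ with $\phi_{\lambda,t,s}:=\theta_\lambda\,\nu_t(\widehat{Q}_\lambda)\sqrt{a^{(t)}_s(\widehat{Q}_\lambda)}$; relabelling the triples $(\lambda,t,s)$ by $i\in\mathbb{N}$ gives the asserted expansion.

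Then I would check (i)--(iii). Property (i): $\operatorname{supp}\phi_i\subset W_\lambda\subset U_\lambda$ for the corresponding $\lambda$, and $\psi_i$ is a nonconstant affine function of the $U_\lambda$-coordinates, so $d\psi_i$ is nowhere zero on $U_\lambda$ and $(d\psi_i)^2$ is a rank-one quadratic form there. Property (ii): $\phi_{\lambda,t,s}$ vanishes identically on any $U_{\lambda'}$ disjoint from $U_\lambda$, and by the bounded-intersection hypothesis only $c_1(m)$ indices $\lambda$ meet a fixed $U_{\lambda'}$, each contributing finitely many pairs $(t,s)$, so for each fixed $\lambda'$ all but finitely many $\phi_i$ vanish on $U_{\lambda'}$. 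Property (iii): at a point $x$ only the at most $c_1(m)$ charts $U_\lambda\ni x$ contribute, in each of them at most $N+1$ of the $\mu_t$ are nonzero at $(\widehat{Q}_\lambda)_x$, and each active $t$ carries $N$ values of $s$, so at most $c(m):=c_1(m)(N+1)N$ of the $\phi_i$ are nonzero at $x$, with $N=m(m+1)/2$.

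The step I expect to be the real obstacle is the linear-algebra/analysis input used above: writing a \emph{variable} positive definite form as a sum of rank-one forms whose coefficients depend \emph{smoothly} on the form, while keeping the number of active terms bounded in terms of $m$ alone. This is precisely the classical metric-decomposition lemma of Nash and Kuiper, which I would quote from \cite{nash, kuiper, eliash}; once it is granted, the remainder is a routine partition-of-unity patching, with the sub-Riemannian aspect absorbed entirely into the extend-to-$TM$-and-restrict-to-$H$ manoeuvre, and in particular no integrability of $H$ is needed.
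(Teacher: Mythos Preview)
Your proposal is correct and follows essentially the same route as the paper: extend the positive-definite form $g_H-f_0^*h|_H$ from $H$ to a Riemannian metric on $TM$ using a complement, apply Nash's classical decomposition to that metric, and restrict the resulting identity back to $H$. The paper carries out the extension globally in one line (setting $g_M=(g_H-f_0^*h|_H)\oplus g_K+f_0^*h$ for a global complement $K$) and then simply cites Nash's decomposition formula, whereas you reprove that formula via the covering-dimension/partition-of-unity argument; but the underlying idea and the final bound $c(m)$ are the same.
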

\begin{proof} Choose a subbundle $K$ of $TM$ which is complementary to $H$ and take any Riemannian metric $g_K$ on it. Then $g_M=(g_H-f_0^*h|_H)\oplus g_K+f_0^*h$ is a Riemannian metric on $TM$ which clearly restricts to $g_H$ on $H$. Moreover, $g_M-f_0^*h>0$ on $M$. Then by Nash's decomposition formula (see \cite{dambra-datta} and \cite{nash}), there exist smooth functions $\psi_i$ and $\phi_i$ as described in the lemma such that $g_M-f_0^*h=\sum_i\phi_i^2\,d\psi_i^2$. By restricting both sides to $H$ we get the desired decomposition.\end{proof}

\noindent\textbf{Construction of an Approximate solution:}  Applying Lemma~\ref{decomposition} we get a decomposition of $g_H-f_0^*h$. We then use this decomposition to obtain a $C^\infty$ map $\tilde{f}$ which is very close to a partial isometry in the sense that $n(g_H-\tilde{f}^*h)$ is sufficiently small. This is achieved following successive deformations $\bar{f}_1$, $\bar{f}_2, \dots, \bar{f}_n,\dots,$ of $f_0$ such that $\bar{f}_{i}^*h$ is approximately equal to $\bar{f}_{i-1}^*h+\phi_{i}^2d\psi_{i}^2$ for each $i$. Each step of deformation involves a convex integration (discussed in Section 3) and the deformation takes place on the open set $U_\lambda$ containing supp\,$\phi_i$ in such a way that the value of the derivative $d\bar{f}_{i-1}$ along $\tau_{i}=\ker d\psi_{i}$ is affected by a small amount.
Since at most finitely many $\phi_i$ are non-zero on any $U_\lambda$, the sequence $\{\bar{f}_i\}$ is eventually constant on each $U_\lambda$ and therefore converges to a $C^\infty$ map $\tilde{f}:M\to N$ which is very close to being a partial isometry. Indeed, for the final map $\tilde{f}$ the total error $g_H-\tilde{f}^*h$, estimated by the function $n(g_H-\tilde{f}^*h)$, can be made arbitrarily small. Moreover, $d(f_0,\tilde{f})$ can be controlled by the function $n(g_H-f_0^*h)$. See Lemma~\ref{approximation} and Lemma~\ref{recursion} for a detailed proof.\\

\noindent\textbf{Obtaining a partial isometry:}
The principal idea is to obtain a partial isometry as the limit of a sequence of $C^\infty$ smooth $g_H$-short $H$-immersions $f_j:M\longrightarrow (N,h)$ which is Cauchy in the fine $C^1$-topology and is such that the induced metric $f_j^*h$ approaches to $g_H$ on $H$ in the limit. More explicitly, the sequence $\{f_j\}$ will satisfy the following relations:
\begin{enumerate}
\item $n(g_H-f_j^*(h))\approx \frac{1}{2}n(g_H-f_{j-1}^*(h)),$

\item $d(f_j,f_{j-1})< c(m) n(g_H-f_{j-1}^*(h))^\frac{1}{2}$,
\end{enumerate}
where $c(m)$ is a constant depending on the dimension $m$ of the
manifold $M$. The $j$-th map $f_j$ can be seen as an improved approximate solution over $f_{j-1}$. The conditions $(1)$ and $(2)$ together guarantee that the sequence $\{f_n\}$ is a
Cauchy sequence in the fine $C^1$ topology and hence it converges
to some $C^1$ map $f:M\longrightarrow N$. Then the induced metric $f^*h$ must be equal to $g_H$ when restricted to $H$ by condition $(1)$. Thus $f$ is the desired partial isometry.

\section{Preliminaries of Convex Integration Theory}

In this section, we recall from \cite{gromov} and \cite{spring} the basic terminology of the theory of $h$-principle and preliminaries of convex integration theory.

Let $M$ and $N$ be smooth manifolds and $x\in M$. If $f:U\to N$ is a $C^r$ map defined on an open subset $U$ of $M$ containing $x$, then the $r$-jet of $f$ at $x$, denoted by $j^r_f(x)$, corresponds to the $r$-th degree Taylor's polynomial of $f$ relative to a coordinate system around $x$. Let $J^r(M,N)$ denote the space of $r$-jets of germs of $C^r$-maps $M\to N$ and let $p^r:J^r(M,N)\lgra M$ be the natural projection map taking $j^r_f(x)$ to $x$, which is a fibration. For any $C^r$ map $f:M\to N$, $j^r_f$ is a section of $p^r$. Moreover, if $r>s$ then there is a canonical projection $p^r_s:J^r(M,N)\to J^s(M,N)$ which takes an $r$ jet at $x$ represented by a germ $f$ to the $s$ jet of $f$ at $x$.

A \textit{partial differential relation} of order $r$ for $C^r$ maps $M\to N$ is defined as a subspace ${\mathcal R}$ of $J^r(M,N)$. If $\mathcal R$ is an open subset then we say that ${\mathcal R}$ is an {\em open\/} relation.

A section $\sigma$ of $p^r:J^r(M,N)\to M$ is said to be a \textit{section} of ${\mathcal R}$ if the image of $\sigma$ is contained in ${\mathcal R}$. A section of $\mathcal R$ is often referred as a formal solution of $\mathcal R$. If $f:M\to N$ is such that $j^r_f$ maps $M$ into $\mathcal R$ then $f$ is called a {\em solution\/} of ${\mathcal R}$.
A section $\sigma:M\to{\mathcal R}$ is called {\em holonomic\/} if $\sigma=j^r_f$ for a $C^r$-map $f:M\lgra N$.

Let $\Gamma({\mathcal R})$ denote the space of sections of the $r$-jet bundle $p^r:J^r(M,N)\longrightarrow M$ whose images lie in ${\mathcal R}$. We endow this space with the $C^0$-compact open topology. The space of $C^r$ solutions of $\mathcal R$ is denoted by Sol\,$\mathcal R$. We endow it with the $C^r$ compact-open topology. Then the $r$-th jet map $j^r:\mbox{Sol\,}{\mathcal R}\lgra \Gamma({\mathcal R})$ defined by $j^r(f)=j^r_f$ is continuous relative to the given topologies.

\begin{defn} A relation $\mathcal R$ is said to satisfy the
$h$-\textit{principle} if given a section $\sigma$ of $\mathcal R$ there
exists a solution $f$ of ${\mathcal R}$ such that $j^r_f$ is
homotopic to $\sigma$ in $\Gamma(\mathcal R)$. If the $r$-jet map $j^r$ is a weak homotopy equivalence then $\mathcal R$ is said to satisfy the \textit{parametric $h$-priniple}.

Let $\mathcal U$ be a subspace of $C^0$ maps $M\to N$. A relation $\mathcal R$ is said to satisfy the $C^0$ \textit{dense $h$-principle near} $\mathcal U$ provided given any $f\in {\mathcal U}$ and any neighbourhood $N$ of $j^0_f(M)$, every section $\sigma$ of $\mathcal R$ which lies over $j^0_f$ (i.e., $p^r_0\circ \sigma=j^0_f)$ admits a homotopy $\sigma_t$ such that $\sigma_t$ lies in $(p^r_0)^{-1}(\mathcal U)\cap{\mathcal R}$ and $\sigma_1$ is holonomic.\end{defn}

Given a differential relation $\mathcal R$, the main problem is to determine whether or not it has a solution. Proving $h$-principle is a step forward towards this goal. If a relation satisfies the $h$-principle then we can not at once say that the solution exists; however, we can conclude that if $\mathcal R$ has a section (i.e., a formal solution) then it  has a solution. Thus, we reduce a differential topological problem to a problem in algebraic topology. There are several techniques due to Gromov which address the question of $h$-principle. The convex integration theory is one such. Here we will review the convex integration theory only for first order differential relations.

Let $\tau$ be a codimension 1 integrable hyperplane distribution on $M$. Let $f$ and
$g$ be germs at $x\in X$ of two $C^1$ smooth maps from $M$ to $N$. We say
that $f$ and $g$ are $\perp$-equivalent if
$$f(x)=g(x)\ \  \mbox{and}\ \
Df_x|_{\tau}=Dg_x|_{\tau}.$$  The $\perp$-equivalence is an equivalence relation on
the $1$-jet space $J^1(M,N)$. The equivalence class of $j^1_f(x)$
is denoted by $j^\perp_f(x)$ and is called the $\perp$-jet of $f$ at $x$.
Since $\tau$ is integrable, we can choose local coordinate
systems $(U;x_1,\dots,x_{n-1},t)$ so that
$\{(x_1,\dots,x_{n-1},t):t=$\,const\} are integral submanifolds of
$\tau$. Moreover, we can express $j^1_f(x)$ as
$(j^\perp_f(x),\partial_tf(x))$, where $j^\perp_f=(\frac{\partial f}{\partial x_1},\dots,\frac{\partial f}{\partial x_{n-1}})$ and $\partial_tf$ denotes the partial derivative of $f$ in the direction of $t$.
In particular, if $M=\R^n$, $N=\R^q$ and $\tau$ is defined by the codimension one foliation $\R^{n-1}\times\R$ on $\R^n$, then the 1-jet space gets a splitting $J^1(\R^n,\R^q)=J^\perp(\R^n,\R^q)\times \R^q$.
The set of all $\perp$-jets, denoted by $J^\perp(M,N)$, has a manifold structure \cite[6.1.1]{spring} and the natural
projection map $p^{1}_\perp: J^1(M,N)\lgra J^\perp(M,N)$, taking a
$1$-jet to its $\perp$-equivalence class (relative to the given $\tau$), defines an affine bundle, in which
the fibres are affine spaces of dimension $n=\dim N$. The fibres of
this affine bundle are called {\em principal subspaces\/} relative
to $\tau$. Note that there is a unique principal subspace through each point of $J^{1}(M,N)$. In fact, the
fibre of $J^{1}(M,N)\lgra J^0(M,N)$ over any $b\in J^0(M,N)$ is
foliated by these principal subspaces and the translation map
takes principal subspaces onto principal subspaces.\\

\noindent\textbf{Notation:} We shall denote the principal subspace through $a\in J^{1}(M,N)$ by $R(a,\tau)$. If $\mathcal R$ is a first order relation and $a\in{\mathcal R}$, then the connected component of $a$ in $\mathcal R\cap R(a,\tau)$ will be denoted by ${\mathcal R}(a,\tau)$.

The following theorem, known as the $h$-Stability Theorem in the literature, (\cite[2.4.2(B)]{gromov} and \cite[Theorems
7.2, 7.17]{spring}) is a key result in the theory of convex integration.

\begin{thm} Let ${\mathcal R}$ be an open relation and let $f_0:M\lgra N$ be a $C^1$ map such
that \begin{enumerate}\item $j^\perp_{f_0}$ lifts to a section
$\sigma_0$ of $\mathcal R$ and
\item $j^1_{f_0}(x)$ lies in the convex hull of ${\mathcal
R}(\sigma_0(x),\tau_x)$ for every $x\in M$.\end{enumerate}
Let $\mathcal N$ be any neighbourhood of $j^\perp_f(M)$.
Then there exists a homotopy $\sigma_t:M\to {\mathcal R}$, $t\in [0,1]$, such that
\begin{enumerate}\item[$(i)$] $\sigma_1=j^1_{f_1}$ for some $C^1$ map $f_1:M\to N$, so that $f_1$ is a solution of $\mathcal R$ and
\item[$(ii)$] $(p^1_\perp\circ \sigma_t)(M)\subset {\mathcal N}$ for all $t\in[0,1]$. In particular $f_1$ is close to $f_0$ in the fine $C^0$ topology.\end{enumerate}

Further, if the initial map $f_0$ is a solution on $Op\,K$ for some closed set $K$ then the homotopy remains constant on $Op\,K$.\label{C-perp}
\end{thm}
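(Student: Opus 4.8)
The plan is to reduce the statement to a local, one-parameter convex integration and then patch the local solutions together. First I would work on a single chart $(U;x_1,\dots,x_{n-1},t)$ adapted to $\tau$, so that the integral leaves of $\tau$ are the slices $\{t=\mathrm{const}\}$ and the $1$-jet splits as $j^1_f(x)=(j^\perp_f(x),\partial_t f(x))$. Over such a chart a section $\sigma_0$ of $\mathcal R$ lifting $j^\perp_{f_0}$ is the data of the $\perp$-jet together with a choice of ``$t$-derivative'' $v_0(x)=\sigma_0(x)$ landing in $\mathcal R(\sigma_0(x),\tau_x)$, which is an open connected subset of the affine principal space $R(\sigma_0(x),\tau_x)\cong\R^n$. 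Hypothesis~(2) says the genuine derivative $\partial_t f_0(x)$ lies in the convex hull of this connected open set. The core analytic input is the one-dimensional convex integration lemma: given a smooth path of connected open sets $\mathcal R(s)\subset\R^n$ and a smooth map $z\colon[0,1]\to\R^n$ with $z(s)\in\mathrm{Conv}\,\mathcal R(s)$, and given any $\delta>0$, there is a smooth $w\colon[0,1]\to\R^n$ with $w(s)\in\mathcal R(s)$ for all $s$, with $\int_0^s w = \int_0^s z + O(\delta)$ uniformly, and agreeing with $z$ near the endpoints (and on a prescribed closed subset where $z$ already takes values in $\mathcal R$). One obtains $w$ by writing $z(s)$ as a convex combination of points of $\mathcal R(s)$ via Carath\'eodory, then rapidly oscillating among those points at frequency $1/\delta$; see \cite[2.4.1]{gromov} or \cite[Chapter~18]{spring}.

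Next I would integrate this lemma in the $t$-direction with the remaining coordinates $x_1,\dots,x_{n-1}$ as parameters. Define $f_1$ on $U$ by integrating the convex-integration vector field: $f_1(x_1,\dots,x_{n-1},t)=f_0(x_1,\dots,x_{n-1},0)+\int_0^t w(x_1,\dots,x_{n-1},s)\,ds$, where $w(\cdot,s)\in\mathcal R(\sigma_0(\cdot,s),\tau)$ pointwise. The choice of $w$ guarantees three things: $\partial_t f_1 = w$ lies in $\mathcal R$, so combined with the fact that the $\perp$-jet of $f_1$ stays $C^0$-close to that of $f_0$ and $\mathcal R$ is \emph{open}, the full $1$-jet $j^1_{f_1}$ lands in $\mathcal R$; the $C^0$-distance $\|f_1-f_0\|$ is $O(\delta)$ because $\int_0^t(w-\partial_tf_0)$ is small; and $j^\perp_{f_1}$ stays in the prescribed neighbourhood $\mathcal N$ because the $x_j$-derivatives of the correction term $\int_0^t(w-\partial_t f_0)\,ds$ are small (here one uses that the oscillation is in the $t$-variable, so $x_j$-derivatives of the integral, not of the integrand, appear, and these are controlled by the $C^0$-smallness). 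The required homotopy $\sigma_t$ is produced by running the convex integration ``partially'': interpolate the oscillation amplitude/phase so that $\sigma_0$ at time $0$ deforms to $j^1_{f_1}$ at time $1$, keeping everything inside $\mathcal R$ and keeping $p^1_\perp\circ\sigma_t$ inside $\mathcal N$ throughout.

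Finally I would globalize. Since $\tau$ is only locally integrable-looking through a chart but is genuinely integrable, cover $M$ by countably many such adapted charts $U_\alpha$, locally finite, with a subordinate structure that lets the construction proceed one chart at a time: having solved over $U_1\cup\dots\cup U_{\alpha-1}$, treat that union as a closed set $K$ on which $f$ is already a solution and apply the local step on $U_\alpha$ relative to $Op\,K$, using the ``constant near $Op\,K$'' clause of the one-dimensional lemma so the new modification does not destroy the previous solution. Openness of $\mathcal R$ is what makes this patching legitimate: a $C^0$-small further perturbation of a solution is still a solution. The same mechanism yields the relative statement (homotopy constant on $Op\,K$ when $f_0$ is already a solution there) for free. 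The step I expect to be the genuine obstacle is the simultaneous control in the globalization: ensuring that the successive chart-by-chart corrections have summable $C^0$ and $C^1$ sizes so that the infinite composition converges to a $C^1$ map whose $1$-jet still lies in $\mathcal R$ and whose $\perp$-jet still lies in $\mathcal N$ --- this is where one must choose the parameters $\delta_\alpha$ carefully against the local finiteness bound of the cover, exactly as in the standard Nash--Kuiper-type bookkeeping.
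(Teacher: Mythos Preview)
The paper does not prove this theorem: it is stated as the $h$-Stability Theorem and attributed to \cite[2.4.2(B)]{gromov} and \cite[Theorems~7.2, 7.17]{spring}, with no argument given. Your sketch is essentially the standard proof found in those references, so in that sense there is nothing to compare against --- you are supplying what the paper deliberately omits.

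As a sketch it is correct in outline --- local one-parameter convex integration along the $t$-leaves, openness of $\mathcal R$ to absorb the perturbation of the $\perp$-jet, chart-by-chart globalization relative to what is already solved --- but one justification is misleading. Your claim that the $x_j$-derivatives of $\int_0^t(w-\partial_t f_0)\,ds$ are ``controlled by the $C^0$-smallness'' is not the actual mechanism: $C^0$-smallness of a function says nothing about its derivatives. The real reason is that the fast oscillation lives only in the $t$-variable, say $w(x,t)=g(x,t,Nt)$ with $g$ periodic in its last argument and bounded together with its $x$-derivatives independently of $N$; integrating in $t$ gains a factor $1/N$, and since $\partial_{x_j}$ commutes with the $t$-integral while $\partial_{x_j}g$ carries no extra powers of $N$, one obtains $\partial_{x_j}\int_0^t(w-\partial_t f_0)\,ds=O(1/N)$ as well. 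That is the computation carried out in \cite{spring}. A second minor point: your formula $f_1=f_0(\cdot,0)+\int_0^t w\,ds$ treats $N$ as Euclidean; for a general target one works in charts on $N$ (or via the exponential map), which is routine but should be said. With these two clarifications your plan matches the standard argument.
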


\begin{rem} Since $C^\infty(M,N)$ is dense in $C^1(M,N)$ relative to the fine $C^1$-topology and $\mathcal R$ is open, we can perturb any $C^1$-solution of $\mathcal R$ to obtain a $C^\infty$ solution.
\end{rem}

\begin{defn} A connected subset $S$ in a vector space (or in an affine space) $V$ is said to be \textit{ample} if the convex hull of $S$ is all of $V$. The subset defined by the polynomial $x^2+y^2-z^2=0$ in $\R^3$ is an example of an ample subset. However, the complement of a 2-dimensional vector subspace in $\R^3$ is not ample.

A relation $\mathcal R$ is said to be \textit{ample} if for every hyperplane distribution $\tau$ on $M$, ${\mathcal R}(a,\tau)$ is ample in $R(a,\tau)$ for all $a\in {\mathcal R}$.
\end{defn}
\begin{thm}$($\cite[2.4.3, Theorem (A)]{gromov}$)$ Every open ample relation satisfies the $C^0$-dense parametric $h$-principle.\label{ample}\end{thm}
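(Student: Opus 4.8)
The plan is to derive Theorem~\ref{ample} from the $h$-Stability Theorem (Theorem~\ref{C-perp}) by a two-level induction: an outer induction localising the problem to a single coordinate chart, and an inner induction over the coordinate directions of that chart which, one direction at a time, converts a formal solution into a genuine holonomic one by a single convex integration. The role of ampleness is to make hypothesis $(2)$ of Theorem~\ref{C-perp} automatic. Indeed, on a chart with an integrable coordinate hyperplane field $\tau=\ker dx_j$, if the current map $f$ has $j^\perp_f$ lifting to a section $\sigma_0$ of $\mathcal R$, then $j^1_f(x)$ and $\sigma_0(x)$ have the same image under $p^1_\perp$, so $j^1_f(x)$ lies on the principal subspace $R(\sigma_0(x),\tau_x)$; ampleness says $\mathcal R(\sigma_0(x),\tau_x)$ has convex hull equal to that whole subspace, so hypothesis $(2)$ holds with nothing to check. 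Thus the only thing one ever has to arrange before invoking Theorem~\ref{C-perp} in the $x_j$-direction is hypothesis $(1)$: that the $\perp_j$-jet of the current map lifts to a section of $\mathcal R$.

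For the localisation I would fix a countable locally finite cover $\{U_i\}$ of $M$ by charts with compact closure over which $J^1(M,N)\to M$ trivialises, put $K_i=\overline{U_1}\cup\dots\cup\overline{U_i}$, and, starting from a section $\sigma$ of $\mathcal R$ lying over $j^0_{f_0}$, homotope $\sigma$ --- remaining inside the prescribed fine $C^0$ neighbourhood --- so that after the $i$-th stage it is holonomic on an open neighbourhood of $K_i$, the passage from $K_{i-1}$ to $K_i$ being carried out inside $U_i$ and kept fixed near $K_{i-1}$ (legitimate by the relative clause of Theorem~\ref{C-perp}). Local finiteness makes these homotopies locally eventually constant, so they compose to a global homotopy ending at a holonomic section, i.e.\ at a genuine $C^1$ solution $C^0$-close to $f_0$; the remark following Theorem~\ref{C-perp} then perturbs it to a $C^\infty$ solution. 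The parametric version comes from running exactly the same scheme over a compact polyhedral parameter pair, treating the parameter directions as extra base coordinates on which $\mathcal R$ imposes no condition; this, with the $C^0$ control of item $(ii)$ of Theorem~\ref{C-perp}, is precisely the statement that $j^1:\mathrm{Sol}\,\mathcal R\to\Gamma(\mathcal R)$ is a weak homotopy equivalence, i.e.\ the $C^0$-dense parametric $h$-principle.

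The substance lies in the chart case, and in particular in the inner induction that keeps hypothesis $(1)$ available while genuinely advancing toward full holonomy. One proceeds through the directions $x_1,\dots,x_n$, maintaining as invariant a formal solution whose formal derivative already agrees with the true derivative of the current map in the first $j$ coordinate directions; when $j=n$ this formal solution is the $1$-jet of the map and we are done. The difficulty --- and this is the step I expect to be the main obstacle --- is that a convex integration in the $x_j$-direction perturbs the derivatives in the already-settled directions $x_1,\dots,x_{j-1}$, so the exact agreement there is lost, and one must also reconcile the formal derivative in the yet-untreated directions with the $\perp_j$-jet before Theorem~\ref{C-perp} even applies. The way out is that item $(ii)$ of Theorem~\ref{C-perp} makes the perturbation of the $\perp_j$-jet arbitrarily $C^0$-small, while openness of $\mathcal R$ provides a definite corridor around the formal solution, so one can, at each step, adjust the \emph{formal} (not the true) derivative by a correspondingly small amount to restore the alignment required to apply Theorem~\ref{C-perp}, carry this out so that successive convex integrations do not undo one another's progress, and reset the bookkeeping. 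This organisation of the convex integrations is the delicate part, and it is what is worked out in detail in \cite[2.4]{gromov} and in \cite{spring}; everything above it is the routine scaffolding.
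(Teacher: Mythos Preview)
The paper does not prove Theorem~\ref{ample}; it merely states the result and attributes it to \cite[2.4.3, Theorem (A)]{gromov}, then immediately passes to applications. So there is no proof in the paper to compare your proposal against.

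That said, your outline is the standard route (Gromov, Spring, Eliashberg--Mishachev): localise to charts, run an inner induction over coordinate directions, use ampleness to make hypothesis~(2) of Theorem~\ref{C-perp} automatic, and use openness plus the $C^0$-smallness in item~(ii) to keep the bookkeeping consistent between successive convex integrations. As a sketch it is correct; if you were asked to supply a proof here, citing \cite{gromov} or \cite{spring} as you do at the end is exactly what the paper itself does.
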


We end this section with an application of Theorem~\ref{ample} to the $H$-immersion relation; (see \cite[8.3.4]{eliash} for an alternative proof).
\begin{prop} Let $M$ be a smooth manifold and $H$ a subbundle of $TM$. Then $H$-immersions $f:M\to N$ satisfy the $C^0$-dense parametric $h$-principle provided $\dim N>$ {\em rank}\,$H$. In other words, every bundle map $(F_0,f_0):TM\to TN$ such that $F_0|_H$ is a monomorphism is homotopic through such bundle maps to an $(F,f):TM\to TN$ such that $F=df$ provided $\dim N>$ {\em rank}\,$H$.\label{H-immersion}\end{prop}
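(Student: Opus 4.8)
The plan is to deduce Proposition~\ref{H-immersion} from the ampleness machinery of Theorem~\ref{ample}, exactly as one proves the classical immersion $h$-principle. First I would formulate the $H$-immersion condition as a first order differential relation $\mathcal R \subset J^1(M,N)$: a $1$-jet $a$ at $x$, represented by a linear map $A: T_xM \to T_{f(x)}N$ together with the base point, lies in $\mathcal R$ precisely when $A|_{H_x}$ is injective. Since injectivity of a linear map on a fixed subspace is an open condition, $\mathcal R$ is an open relation. The first nontrivial point is to check that $\mathcal R$ is nonempty and has the right formal solutions; here the hypothesis $\dim N > \operatorname{rank} H = k$ is essential, because a monomorphism $H_x \to T_{f(x)}N$ exists iff $n \geq k$, and for the ampleness argument below one in fact needs the strict inequality.

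Next I would verify that $\mathcal R$ is ample in the sense of the last definition, i.e.\ that for every hyperplane field $\tau$ on $M$ and every $a \in \mathcal R$, the set $\mathcal R(a,\tau)$ is ample in the principal subspace $R(a,\tau)$. Fix $x$, a hyperplane $\tau_x \subset T_xM$, and a coordinate $t$ transverse to $\tau_x$ as in Section~3. Moving along the principal subspace through $a$ amounts to changing only the partial derivative $\partial_t f(x) \in T_{f(x)}N \cong \R^n$, keeping $A|_{\tau_x}$ fixed. Write $H_x \cap \tau_x$ for the part of $H_x$ already handled; there are two cases. If $H_x \subset \tau_x$, then $A|_{H_x}$ is unaffected by the translation, so either the whole principal subspace lies in $\mathcal R$ (and is trivially ample) or none of it does; the latter cannot occur because $a \in \mathcal R$. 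If $H_x \not\subset \tau_x$, then $H_x \cap \tau_x$ has dimension $k-1$, pick $v \in H_x \setminus \tau_x$; the constraint on $\partial_t f(x) =: w$ is that $A|_{H_x\cap\tau_x}$ together with $w$ (plus the already-fixed images) span an injective image, which translates into: $w$ avoids a certain affine subspace of $T_{f(x)}N$ of dimension $\le (k-1) + 1 - 1 = k-1 < n$ — actually the bad set is the preimage under a linear surjection of a $(k-1)$-dimensional subspace, hence an affine subspace of codimension $\ge n-(k-1) \ge 2$. The complement of an affine subspace of codimension $\ge 2$ in $\R^n$ is connected and has full convex hull, so $\mathcal R(a,\tau)$ is ample. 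This is the step I expect to be the main obstacle: making the case analysis clean and confirming that $\dim N > k$ (not merely $\ge k$) is exactly what forces the bad set to have codimension at least two, which is what ampleness of the complement requires.

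With ampleness established, Theorem~\ref{ample} immediately gives that $\mathcal R$ satisfies the $C^0$-dense parametric $h$-principle. It remains only to translate this back into the bundle-map language of the statement. A formal solution of $\mathcal R$ over a map $f_0$ is, up to homotopy, the same data as a bundle map $(F_0, f_0): TM \to TN$ with $F_0|_H$ a monomorphism: indeed $\Gamma(\mathcal R)$ deformation retracts onto the space of such bundle maps covering $C^0$ maps $M \to N$ (the fiber direction contributing no extra homotopy, as $\mathcal R \to J^0(M,N)$ has contractible — convex-complement — fibers in the relevant sense). The parametric $h$-principle then says the holonomic sections $j^1_f$ with $f$ an $H$-immersion are weakly homotopy equivalent to all such bundle maps, which is precisely the assertion that any $(F_0,f_0)$ with $F_0|_H$ injective is homotopic through such to one of the form $(df, f)$, and the $C^0$-density clause gives that $f$ may be taken arbitrarily $C^0$-close to $f_0$. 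I would close by remarking that an alternative, more hands-on proof is available in \cite[8.3.4]{eliash}, as already noted.
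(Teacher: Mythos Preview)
Your proposal is correct and follows essentially the same route as the paper: cast $H$-immersions as an open first-order relation $\mathcal{R}\subset J^1(M,N)$, verify ampleness by the same two-case split on whether $H_x\subset\tau_x$ (in the nontrivial case the bad set in the principal subspace has codimension $n-(k-1)\geq 2$), and invoke Theorem~\ref{ample}. The only cosmetic difference is that the paper parametrizes the principal subspace by a vector $v\in H_x$ transverse to $H_x\cap\tau_x$, which makes the bad set the linear subspace $df_x(H_x\cap\tau_x)$ rather than an affine translate; your final translation to bundle-map language is fine, though the parenthetical about ``contractible fibers'' is unnecessary since sections of $\mathcal{R}$ are already, by definition, precisely the bundle maps $(F,f)$ with $F|_H$ injective.
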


\begin{proof} The $H$-immersions $f:M\to N$ are solutions to the first order partial differential relation
\begin{center}${\mathcal R}=\{(x,y,\alpha)\in J^1(M,N)|\ \alpha|_{H_x}:H_x\to T_yN \mbox{ is injective linear} \}$.\end{center}
First of all, we prove that $\mathcal R$ is an open relation. Recall that if $(U,\phi)$ and $(V,\psi)$ are coordinate charts in $M$ and $N$ respectively then the bijection  $\tau:J^1(U,V)\to J^1(\phi(U),\psi(V)=\phi(U)\times\psi(V)\times L(\R^m,\R^n)$ defined by
\begin{center}$\tau(j^1_f(x))=(\phi(x),\psi(f(x)),d(\psi f\phi^{-1})_{\phi(x)})$
\end{center}
is a coordinate chart for the total space $J^1(M,N)$ of the 1-jet bundle \cite{guillemin}, where $m=\dim M$ and $n=\dim N$. Since $H$ is a subbundle of $TM$ we can further choose a trivialisation $\Phi:TM|_U\to U\times\R^m$ of the bundle $TM|_U$ (possibly after shrinking $U$), such that $\Phi$ maps $H$ onto $U\times \R^k$. Then $\bar{\tau}: J^1(U,V)\to \phi(U)\times\psi(V)\times L(\R^m,\R^n)$ by $\bar{\tau}(j^1_f(x))=
(\phi(x),\psi(f(x)),d(\psi f)_x\circ \bar{\Phi}^{^{-1}}_{\phi(x)})$ is a diffeomorphism, where $\bar{\Phi}=(\phi\times {Id\,})\circ \Phi:TM|_U\to \phi(U) \times \R^m$.

Now, consider the restriction morphism $r:L(\R^m,\R^n)\to L(\R^k,\R^n)$ which takes a linear transformation $L\in L(\R^m,\R^n)$ onto its restriction $L|_{\R^k}$. Let $L_k(\R^m,\R^n)$ denote  the inverse image under $r$ of the set of all monomorphisms $\R^k\to\R^n$. This is clearly an open set and it is easy to see that $\bar{\tau}$ maps $\mathcal R\cap J^1(U,V)$ diffeomorphically onto $\phi(U)\times \psi(V)\times L_k(\R^m,\R^n)$. Consequently $\mathcal R$ is open in the 1-jet space $J^1(M,N)$.

Next, we shall show that $\mathcal R$ is an ample relation. To see this, consider a codimension 1 subspace $\tau_x$ of $TM_x$ for some $x\in M$ and take a 1-jet $j^1_f(x)\in{\mathcal R}$. We need to show that the principal subspace
\begin{center} $R(j^1_f(x),\tau_x)=\{(x,f(x),\beta)\in J^1(M,N)| \beta=df_x \mbox{ on } \tau_x\}$\end{center} intersects the relation $\mathcal R$ in a pathconnected set and moreover the convex hull of the intersection, denoted by ${\mathcal R}(j^1_f(x),\tau_x)$, is all of $R(j^1_f(x),\tau_x)$. There are two possible cases:

Case 1. $H_x\subset \tau_x$. In this case, the principal subspace is completely contained in ${\mathcal R}$. Thus ${\mathcal R}(j^1_f(x),\tau_x)$ is equal to the principal subspace itself.

Case 2. $H_x\cap\tau_x$ is a codimension 1 subspace of $H_x$. Choose a vector $v\in H_x$ which is transverse to $H_x\cap \tau_x$. First observe that $R(j^1_f(x),\tau_x)$ is affine isomorphic to $T_{f(x)}N$ since any 1-jet $(x,y,\beta)$ is completely determined by $\beta(v)$. Therefore, ${\mathcal R}(j^1_f(x),\tau_x)$ is  affine equivalent to the subset
\begin{center} $S(j^1_f(x))=\{w\in T_{f(x)}N|w\not\in df_x(\tau_x\cap H_x)\}$.\end{center}
Since $\tau_x\cap H_x$ has dimension $k-1$ and $df_x$ is injective on $H_x$, the subspace $df_x(\tau_x\cap H_x)$ is of codimension at least 2 in $T_{f(x)}N$ provided $\dim N>k$. Hence $S(j^1_f(x))$ is path-connected and its convex hull is all of $T_{f(x)}N$. In other words, the convex hull of ${\mathcal R}(j^1_f(x),\tau_x)$ is all of $R(j^1_f(x),\tau_x)$.

This proves that $\mathcal R$ is an open, ample relation. Hence, we can apply Theorem~\ref{ample} to conclude that $\mathcal R$ satisfies the $C^0$-dense parametric $h$ principle.
\end{proof}

\begin{cor} Suppose that $f_0:M\to N$ is a smooth map. If $\dim N\geq \dim M+$ {\em rank} $H$, then $f_0$ can be homotoped within its $C^0$-neighbourhood to a $C^\infty$ $H$-immersion $f:M\to N$.\end{cor}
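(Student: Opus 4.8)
The plan is to deduce the corollary directly from Proposition~\ref{H-immersion}. Since $\dim N\geq\dim M+\operatorname{rank}H$ implies in particular $\dim N>\operatorname{rank}H$, that proposition applies, and it reduces the problem to exhibiting a \emph{formal} $H$-immersion lying over $f_0$: a bundle map $(F_0,f_0)\colon TM\to TN$ covering $f_0$ whose restriction $F_0|_H$ is a fibrewise monomorphism. Once such an $F_0$ is in hand, Proposition~\ref{H-immersion} homotopes it through formal $H$-immersions to a holonomic one $(df,f)$ with $f$ a $C^1$ $H$-immersion; the $C^0$-density clause keeps the underlying map within the prescribed $C^0$-neighbourhood of $f_0$ along the whole homotopy, and the Remark after Theorem~\ref{C-perp} allows a final $C^1$-small perturbation of $f$ to a $C^\infty$ map, legitimate because the $H$-immersion relation $\mathcal R$ is open.

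So the only real work is to construct $F_0$. First I would fix a splitting $TM=H\oplus K$; it then suffices to build a fibrewise monomorphism $G_0\colon H\to f_0^*TN$, since setting $F_0=G_0$ on $H$ and $F_0=0$ on $K$ produces a bundle map with the required property, its behaviour on $K$ being immaterial to $\mathcal R$. Now $G_0$ is precisely a section of the bundle over $M$ whose fibre over $x$ is the space of injective linear maps $H_x\to(f_0^*TN)_x$; via Gram--Schmidt this fibre deformation retracts onto the compact Stiefel manifold $V_k(\R^n)$ of orthonormal $k$-frames in $\R^n$, where $k=\operatorname{rank}H$ and $n=\dim N$.

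The existence of this section is where, and the only place where, the dimension hypothesis enters. The manifold $V_k(\R^n)$ is $(n-k-1)$-connected, so the successive obstructions to extending a section over the skeleta of $M$ lie in $H^{i+1}\bigl(M;\pi_i(V_k(\R^n))\bigr)$ (local coefficients permitted), and these groups vanish as soon as $i\leq n-k-1$. Because $\dim M=m$, only obstructions with $i+1\leq m$, i.e.\ $i\leq m-1$, can occur, and the hypothesis $n\geq m+k$ gives $m-1\leq n-k-1$; hence every relevant obstruction group is trivial and a global section $G_0$ exists. This obstruction count is the one point requiring care; everything else is formal, invoking only the openness of $\mathcal R$ and the $h$-principle for $H$-immersions already established.

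Putting the pieces together yields the corollary: the bundle map $F_0$ assembled from $G_0$ is a formal $H$-immersion over $f_0$, Proposition~\ref{H-immersion} converts it into a genuine $C^1$ $H$-immersion $f$ that is as $C^0$-close to $f_0$ as desired with the homotopy confined to the given neighbourhood, and a $C^1$-small perturbation promotes $f$ to $C^\infty$.
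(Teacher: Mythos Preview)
Your argument is correct and follows essentially the same route as the paper: reduce via Proposition~\ref{H-immersion} to producing a fibrewise monomorphism $H\to f_0^*TN$, then invoke obstruction theory with fibre the Stiefel manifold $V_k(\R^n)$ and its $(n-k-1)$-connectedness to kill all obstructions under the hypothesis $n\geq m+k$. The paper states this more tersely, omitting the explicit splitting $TM=H\oplus K$, the Gram--Schmidt retraction, and the final $C^\infty$ perturbation, but the substance is identical.
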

\begin{proof} In view of the above proposition it is enough to show that $f_0$ can be covered by a monomorphism $F:H\to TN$ if $\dim N\geq \dim M+$ {\em rank} $H$. It is well-known that the obstruction to the existence of such an $F$ lies in certain homotopy groups of the Stiefel manifold $V_k(\R^n)$, namely in $\pi_i(V_k(\R^n))$ for $0\leq i\leq m-1$, where $m=\dim M$, $n=\dim N$ and $k=\mbox{ rank\,}H$. Since $V_k(\R^n)$ is $n-k-1$ connected the obstructions vanish for $n\geq m+k$. This proves the corollary.
\end{proof}

\begin{rem} The set of smooth $H$-immersions $M\to N$ is an open, dense subset of $C^\infty(M,N)$ relative to the fine $C^\infty$ topology when $\dim N\geq \dim M+$ {\em rank} $H$ \cite[Proposition 2.2]{dambra-loi}.\end{rem}

\section{Proof of Theorem~\ref{main} and Corollary~\ref{partial isometry}}

Let $(N,h)$ be a smooth Riemannian manifold of dimension $n$ and $(M,H,g_H)$ be as in Section 1. Let $g_0$ be a Riemannian metric on $M$ such that $g_0|_H=g_H$. Suppose that $\dim N>$ rank $H$.

\begin{lem}$($Main Lemma$)$ Let $g$ be a Riemannian metric on $H$ such that $g<g_H$. Suppose that $f:M\to N$ is a smooth $H$-immersion and $g-f^*h=\phi^2d\psi^2$ on $H$, where $\phi$ and $\psi$ are smooth real valued functions on $M$ such that supp\,$\phi$ is contained in a coordinate neighbourhood $U$ and $d\psi$ has rank 1 on $U$. Given any two positive functions $\vare$ and $\delta$ on $M$, $f$ can be homotoped to a $C^\infty$ map $\wit{f}:M\to N$ in a given $C^0$ neighbourhood of $f$ such that $\wit{f}$ coincides with $f$ outside $U$ and satisfies the following properties:
\begin{enumerate}\item[$(i)$] $\wit{f}$ is an $H$-immersion,
\item[$(ii)$] $0\leq n_{g_H}(g-\wit{f}^*h)<\delta$,
\item[$(iii)$] $d_{g_0}(f,\wit{f}) < n_{g_H}(g-f^*h)^\frac{1}{2}+\vare$.
\end{enumerate}
\label{approximation}
\end{lem}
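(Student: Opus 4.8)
The plan is to realize $\wit f$ by a single convex integration along the hyperplane field $\tau = \ker d\psi$ on $U$, following the one-step deformation scheme of Kuiper as reformulated by Gromov. Since $\phi$ is supported in the coordinate neighbourhood $U$ and $d\psi$ has rank $1$ there, we may work in coordinates $(x_1,\dots,x_{m-1},t)$ on $U$ adapted to $\tau$, with $d\psi = a\,dt$ for a nowhere-vanishing function $a$; the leaves $\{t=\text{const}\}$ are integral submanifolds of $\tau$. The target metric to be achieved on $H$ is $f^*h + \phi^2 d\psi^2 = g$, so the deformation must add (approximately) the rank-one form $\phi^2 d\psi^2$ to the induced metric along $H$, while disturbing $df$ along $\tau$ (hence along $H\cap\tau$, which carries all of $H$ except the $t$-direction) by an arbitrarily small amount.

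First I would set up the first-order relation whose solutions are the desired maps. Because $f$ is an $H$-immersion and $\dim N > \operatorname{rank} H$, I can pick, over $U$, a unit normal-type vector field $\nu$ along $f$ in $TN$ that is $h$-orthogonal to $df(H)$ — this uses $\dim N > k$ exactly as in Case~2 of Proposition~\ref{H-immersion}. The classical ansatz is to look for $\wit f$ whose $t$-derivative is a loop: $\partial_t\wit f \approx r(\cos\theta\, e + \sin\theta\,\nu)$ where $e$ is the unit vector in the direction of $\partial_t f$ and $r,\theta$ are chosen so that $\|\partial_t\wit f\|^2 = \|\partial_t f\|^2 + (\text{coefficient of }dt^2\text{ in }\phi^2 d\psi^2)$ while the $\tau$-derivatives $j^\perp_{\wit f}$ stay within $\vare$ of those of $f$. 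The point is that the relevant relation — "$\|\partial_t\wit f\|^2$ equals the prescribed value and $\wit f$ is an $H$-immersion" — is open, and on each principal subspace $R(a,\tau_x)$ (an affine copy of $T_{f(x)}N$, via the value of the $t$-derivative) its trace is the complement of a sphere together with the ample condition from $H$-immersion; its connected component has convex hull the whole principal subspace because $j^1_f(x)$, corresponding to $\partial_t f(x)$, lies inside the sphere of the prescribed larger radius (here $g > f^*h$ on $H$ forces the prescribed $dt^2$-coefficient to dominate $\|\partial_t f\|^2$, modulo controlling cross terms). I would then invoke Theorem~\ref{C-perp} (the $h$-stability theorem) to produce a genuine $C^1$ (then $C^\infty$, by the Remark) solution $\wit f$ that is $C^0$-close to $f$, equals $f$ outside $\operatorname{supp}\phi \subset U$, and has $j^\perp_{\wit f}$ in any prescribed neighbourhood of $j^\perp_f$.

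The three conclusions then follow from bookkeeping on the convex-integration output. Item~(i) holds because the $H$-immersion condition is part of the open relation we integrate, so it is preserved. Item~(iii) is the standard one-step estimate: the $\tau$-part of $d\wit f$ differs from that of $df$ by less than $\vare$ (from the $C^0$-closeness of $\perp$-jets in Theorem~\ref{C-perp}), while the $t$-part changes the derivative in the $\partial_t$-direction by at most the diameter of the loop, which is bounded by a multiple of $\|\phi\, d\psi\|_{g_H}$, and $\|\phi^2 d\psi^2\|$ on $H$ is exactly $n_{g_H}(g - f^*h)$ since $g - f^*h = \phi^2 d\psi^2$; combining gives $d_{g_0}(f,\wit f) < n_{g_H}(g-f^*h)^{1/2} + \vare$. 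Item~(ii) is the error estimate: $g - \wit f^{\,*}h$ on $H$ is the sum of the controlled $\tau$-error (quadratically small, hence $<\delta$ after choosing the neighbourhood small) and the defect of $\|\partial_t\wit f\|^2$ from its prescribed value, which the loop construction makes as small as we like; non-negativity $n_{g_H}(g-\wit f^{\,*}h)\ge 0$ is immediate from the definition of $n_{g_H}$ of a form that we arrange to be (weakly) positive by a standard truncation. The main obstacle is the last point: making $g - \wit f^{\,*}h$ genuinely nonnegative on $H$ (not merely small in $n_{g_H}$), since convex integration a priori only controls the size of the error, not its sign. This is handled, as in Nash's original argument, by performing the deformation to hit not $g$ but a slightly smaller target $g - \eta\, g_H$ for a small positive function $\eta$, absorbing the one-step error into the gap $\eta\, g_H$ so that $\wit f^{\,*}h|_H \le g$ while still $n_{g_H}(g - \wit f^{\,*}h) < \delta$; I would reserve the details of choosing $\eta$ (and the required shrinking of the $\perp$-jet neighbourhood as a function of $\delta$, $\vare$, and the $C^2$-size of $f$ on $U$) for the body of the proof.
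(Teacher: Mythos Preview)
Your approach is the paper's: convex integration via the $h$-stability theorem (Theorem~\ref{C-perp}) along $\tau=\ker d\psi$, applied to an open thickening of the partial-isometry relation $\mathcal I=\{\alpha^*h=g$ on $H\}$ inside the $H$-immersion relation. Two points need correction.

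First, your ``main obstacle'' is illusory. The quantity $n_{g_H}$ is defined with an absolute value, $n_{g_H}(\bar g)(x)=\sup_{v}|\bar g_x(v,v)|/g_H(v,v)$, so $n_{g_H}(g-\wit f^{\,*}h)\ge 0$ is automatic; the lemma does \emph{not} ask that $g-\wit f^{\,*}h\ge 0$ as a form, and the paper performs no $\eta$-shift. (What is needed downstream is only $\wit f^{\,*}h<g_H$, which follows from $g<g_H$ and $n_{g_H}(g-\wit f^{\,*}h)<\delta$ for $\delta$ small.)

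Second, your loop-diameter sketch for (iii) is too coarse to produce the constant~$1$ in front of $n_{g_H}(g-f^*h)^{1/2}$. The paper gets this by a further restriction of the relation. On the principal subspace, $\mathcal I$ cuts out a \emph{sphere} (not its complement): with $\mathbf v_x\in H_x$ the $g$-unit vector $g$-orthogonal to $H_x\cap\tau_x$, the hypothesis $g-f^*h=\phi^2d\psi^2$ forces $df_x(\mathbf v_x)\perp_h df_x(H_x\cap\tau_x)$, and $\mathcal I\cap R(j^1_f(x),\tau_x)$ corresponds to $S_x=\{w\in T_{f(x)}N:\|w\|_h=1,\ w\perp_h df_x(H_x\cap\tau_x)\}$. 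One then passes to the spherical cap $S'_x=\{w\in S_x: h(w,df_x(\mathbf v_x))\ge \|df_x(\mathbf v_x)\|_h^2\}$, which is still path-connected, symmetric about $w_0=df_x(\mathbf v_x)/\|df_x(\mathbf v_x)\|_h$, and contains $df_x(\mathbf v_x)$ in its convex hull; crucially every $w\in S'_x$ satisfies $\|w-df_x(\mathbf v_x)\|_h\le\sqrt{1-\|df_x(\mathbf v_x)\|_h^2}=\sqrt{(g-f^*h)(\mathbf v_x,\mathbf v_x)}$. Applying Theorem~\ref{C-perp} to a small open neighbourhood of $\mathcal I'$ (the subset of $\mathcal I$ corresponding to $S'_x$) then yields $d_{g_0}(f,\wit f)<n_{g_H}(g-f^*h)^{1/2}+\vare$ on the nose. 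Your single-normal ansatz is a particular loop inside this cap, but without imposing the cap restriction the $h$-stability theorem gives you no control over which part of the sphere the solution visits.
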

\begin{rem} Observe that inequality (iii) in the lemma above is independent of any particular choice of $g_0$.
\end{rem}

\begin{proof} We will prove the lemma by an application of Theorem~\ref{C-perp}. First observe that the partial isometries $(M,H,g)\to (N,h)$ are solutions to a first order differential relation $\mathcal I$ given by:
\begin{center}${\mathcal I}=\{(x,y,\alpha)\in J^1(M,N)|\,\alpha:T_xM\to T_yN$ is linear and $\alpha^*h=g$  on  $H_x$\}.\end{center}
Let $f$ be as in the hypothesis: $g-f^*h=\phi^2d\psi^2$ on $H$, where $d\psi$ is of rank 1 on $U$. Then the kernel of $d\psi$ defines a codimension 1 integrable distribution on $U$ which we will denote by $\tau$. We construct $\perp$-jet bundle $J^\perp(M,N)\to M$ relative to this hyperplane distribution $\tau$ as described in Section 3. Recall that two 1-jets $(x,y,\alpha)$ and $(x,y,\beta)$ in $J^1(M,N)$ are equivalent if $\alpha|_{\tau_x}=\beta|_{\tau_x}$, and the $\perp$-jets are equivalence classes of 1-jets under the above equivalence relation. The set of all 1-jets equivalent to $(x,y,\alpha)$ is an affine subspace of $J^1_{(x,y)}(M,N)$. This is referred as the principal subspace containing $\alpha$ and is denoted by $R(\alpha,\tau)$. We claim that for all $x\in U$
\begin{enumerate}\item[$(a)$] $R(j^1_f(x),\tau_x)\cap{\mathcal I}$ is a non-empty path-connected set.
\item[$(b)$] $j^1_f(x)$ belongs to the convex hull of $R(j^1_f(x),\tau_x)\cap{\mathcal I}$.
\item[$(c)$] There is a section $\sigma_0$ of $\mathcal I$ such that $\sigma_0(x)\in R((j^1_f(x),\tau_x)$.
\end{enumerate}

Let $V$ denote the open subset of $U$ consisting of all points $x$ such that $g-f^*h|_{H_x}\neq 0$. We shall first prove the statements (a), (b) and (c) for points in $V$. If $x\in V$ then $H_x$ is not contained in $\tau_x$ because  $g-f^*h=\phi^2d\psi^2$  on $H_x$, and therefore $H_x\cap \tau_x$ is of codimension 1 in $H_x$. Choose a smooth \textit{unit vector field} $\textbf{v}$ on the open set $V$ such that $\textbf{v}_x\in H_x$ and is $g$-\textit{orthogonal} to $H_x\cap\tau_x$ for all $x\in V$. Observe that every $\beta\in R(j^1_f(x),\tau_x)$ is then  completely determined by its value on the vector $\textbf{v}_x$. In fact, we can define an affine isomorphism $R(j^1_f(x),\tau_x)\to T_yN$ by $\beta\mapsto \beta(\textbf{v}_x)$ which maps
${\mathcal I}\cap R(j^1_f(x),\tau_x)$ onto the set
\begin{center}$S_x=\{w\in T_{f(x)}N|\  w\perp_h df_x(H_x\cap \tau_x),\ \|w\|_h=1\}$.\end{center}
Denote the rank of $H$ by $k$. Since $df_x|_{H_x}$ is injective linear, $S_x$ represents the unit sphere in a codimension $(k-1)$ subspace of $T_yN$. Hence, for $n>k$, $S_x$ is path-connected. This proves (a).

Also, $df_x(\textbf{v}_x)$ lies in the convex hull of $S_x$. Indeed, the condition $g-f^*h=\phi^2d\psi^2$ on $H_x$ implies that $h(df_x(\textbf{v}_x),df_x(w))=0$ for all $w\in H_x\cap \tau_x$ and therefore, $df_x(\textbf{v}_x)$ is $h$-orthogonal to $df_x(H_x\cap\tau_x)$. Moreover, as  $g-f^*h|_{H_x}>0$ and $f$ is an $H$-immersion it also follows that  $0<\|df_x(\textbf{v}_x)\|< 1$. Hence, $df_x(\textbf{v}_x)$ lies in the convex hull of $S_x$ proving (b).

To prove (c) note that $df_x(\textbf{v}_x)$ is orthogonal to $df_x(H_x\cap\tau_x)$. Therefore, if we define $w_0(x)={df_x(\textbf{v}_x)}/{\|df_x(\textbf{v}_x)\|_h}$ then $w_0(x)\in S_x$. Let $\sigma_0(x)$ denote the 1-jet in $R(j^1_f(x),\tau_x)\cap{\mathcal I}$ which corresponds to $w_0(x)$. Thus, $\sigma_0$ is a continuous section of $\mathcal I$ over $V$ as mentioned in (c).

If $x\in U\setminus V$, then either $\phi(x)=0$ or $d\psi_x|_{H_x}=0$ i.e., $H_x$ is contained in $\tau_x=\ker d\psi_x$. If $\phi(x)=0$ then proceeding as in the above case we can prove that (a) and (b) are true. If $H_x\subset \tau_x$ then the principal subspace $R(j^1_f(x),\tau_x)$ is completely contained in $\mathcal I$. Therefore (a) and (b) are clearly true in this case also. Further, $x\in U\setminus V$ implies that $j^1_f(x)\in{\mathcal I}$ and we can choose $\sigma_0(x)=j^1_f(x)$ on $U\setminus V$ so that (c) is proved on all of $U$. This completes the proof of the claim made above.

In fact, we have proved that the map $f:M\to N$ satisfies both (1) and (2) of the hypothesis of Theorem~\ref{C-perp} relative to the relation $\mathcal I$. Indeed, by our construction $\sigma_0$ lifts $j^\perp_f$. Further, $j^1_f(x)$ lies in the convex hull of ${\mathcal I}(\sigma_0(x),\tau_x)$ for all $x\in U$. This follows from (a) and (b) since $R(j^1_f(x),\tau_x)\cap {\mathcal I}=R(\sigma_0(x),\tau_x)\cap {\mathcal I}={\mathcal I}(\sigma_0(x),\tau_x)$ (see section 3). 
However, we cannot apply Theorem~\ref{C-perp} to $(f,\mathcal I)$, since $\mathcal I$ is not open. To surpass this difficulty, we consider a small open neighbourhood $Op\,{\mathcal I}$ of ${\mathcal I}$ in the $H$-immersion relation $\mathcal R$ and apply Theorem~\ref{C-perp} to the pair $(f,Op\,{\mathcal I})$ to obtain a smooth $H$-immersion $\tilde{f}:M\to N$ which is a solution of $Op\,{\mathcal I}$. By choosing $Op\,{\mathcal I}$ sufficiently small we can ensure that $\tilde{f}^*h|_H$ is arbitrarily close to $g$. Thus, we prove (i) and (ii) as stated in the theorem.

In order that $\tilde{f}$ satisfies condition (iii) as well, we need to modify the relation $Op\,\mathcal I$ further.
Consider the subset $S'_x=\{w\in S_x| h(w,df_x(\textbf{v}_x))\geq h(df_x(\textbf{v}_x),df_x(\textbf{v}_x)\}$ of $S_x$ (see \cite[\S 21.5]{eliash}). This is path-connected, symmetric about $w_0(x)$ and contains $df_x(\textbf{v}_x)$ in its convex hull. Moreover, for any vector $w$ in $S'_x$, $\|w-df_x(\textbf{v}_x)\| \leq \sqrt{1-\|df_x(\textbf{v}_x)\|^2}$.  Let ${\mathcal I}'$ denote the subset of ${\mathcal I}$ defined by $S'_x$, $x\in M$. Now, applying Theorem~\ref{C-perp} to $(Op\,{\mathcal I}',f)$ we obtain a $C^\infty$ map $\tilde{f}:M\to N$ which is homotopic to $f$ and is a solution of $Op\,{\mathcal I}'$. As we have already observed, $\tilde{f}$ satisfies (i) and (ii) as stated in the theorem. Further, we have,
\begin{center}$\begin{array}{rcl}\|d\tilde{f}_x(\textbf{v}_x)-df_x(\textbf{v}_x)\|_h & \leq & \sqrt{1-\|df_x(\textbf{v}_x)\|_h^2}+\vare\\
& = & \sqrt{(g-f^*h)(\textbf{v}_x,\textbf{v}_x)} + \vare
\end{array}$\end{center}
where the `error term' $\vare$ appears because of enlarging $\mathcal I$.
Since $g_0|_H=g_H$ and $\textbf{v}_x\in H$, dividing out both sides by $\|\textbf{v}_x\|_{g_0}$ we obtain from the above that
$$\frac{\|d\tilde{f}_x(\textbf{v}_x)-df_x(\textbf{v}_x)\|_h}{\|\textbf{v}_x\|_{g_0}}\leq
\sqrt{n_{g_H}(g-f^*h)}+\vare.$$
Moreover, by Theorem~\ref{C-perp} we can choose $\tilde{f}$ so that the directional derivatives of $\tilde{f}$ along $\tau$ are arbitrarily close to the corresponding derivatives of $f$. Thus we obtain that $d_{g_0}(f,\tilde{f})\leq\sqrt{n_{g_H}(g-f^*h)}+\vare$.\end{proof}

\begin{rem} In the above lemma we started with a $C^\infty$ map $f$ satisfying $f^*h<g<g_H$ and given any $\delta>0$ obtained an $\tilde{f}$ satisfying the condition $n(g-\tilde{f}^*h)<\delta$. Therefore, if we choose $\delta$ sufficiently small then $\tilde{f}$ can be made to satisfy the inequality $f^*h<\tilde{f}^*h< g_H$.\end{rem}

We now fix a countable open covering ${\mathcal U}=\{U_\lambda|\lambda\in\Lambda\}$ of the manifold $M$
which has the following properties:
\begin{enumerate}
\item[$(a)$] each $U_\lambda$ is a coordinate neighbourhood in $M$ and
\item[$(b)$] for any $\lambda_0$, $U_{\lambda_0}$ intersects atmost $c_1(m)$ many $U_\lambda$'s
including itself,
\end{enumerate}
where $c_1(m)$ is an integer depending on $m=\dim M$. This open convering will remain fixed througout. All decompositions of Riemannian metrics on $H$ will be considered with respect to this covering.

\begin{lem}Let $f_0:M\to N$ be a smooth $H$-immersion such that $f_0^*h<g_H$ on $H$. Then $f_0$ can be homotoped to a $C^\infty$ map $f_1$ in any given $C^0$ neighbourhood of $f_0$ such that
\begin{enumerate}
\item[$(i)$] ${f}_1$ is an $H$-immersion and $f_1^*h<g_H$ on $H$,
\item[$(ii)$] $0<n_{g_H}(g_H-f_1^*h)<\frac{2}{3}n_{g_H}(g_H-f_0^*h)$,
\item[$(iii)$] $d_{g_0}(f_0,f_1)<c(m) \sqrt{n_{g_H}(g_H-f_0^*h)}$,
\end{enumerate}
where $c(m)$ is a constant which depends on the dimension $m$ of $M$.\label{recursion}
\end{lem}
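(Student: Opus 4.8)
The plan is to obtain $f_1$ as the limit of a locally finite sequence of applications of the Main Lemma (Lemma~\ref{approximation}) along the terms of a Nash-type decomposition. First I would fix an \emph{intermediate} target metric $g=\tfrac12(g_H+f_0^*h)$ on $H$. Since $f_0^*h<g_H$ on $H$, this is a Riemannian metric on $H$ with $f_0^*h<g<g_H$ and $g_H-g=\tfrac12(g_H-f_0^*h)$, so $n_{g_H}(g_H-g)=\tfrac12\,n_{g_H}(g_H-f_0^*h)$. Using a $g$ that is strictly interior, rather than $g_H$ itself, is exactly what keeps a uniform margin below $g_H$ available at every stage of the iteration, and it is what produces the factor $\tfrac23$ (rather than an arbitrarily small one) in conclusion (ii). I would then apply Lemma~\ref{decomposition}, relative to the fixed covering $\mathcal U=\{U_\lambda\}$, to the metric $(g-f_0^*h)|_H=\tfrac12(g_H-f_0^*h)|_H$, obtaining a decomposition $g-f_0^*h=\sum_{i=1}^\infty\phi_i^2 d\psi_i^2$ on $H$ in which $\mathrm{supp}\,\phi_i\subset U_{\lambda(i)}$ for some $\lambda(i)$, $d\psi_i$ has rank $1$ on $U_{\lambda(i)}$, at most $c(m)$ of the $\phi_i$ are nonzero at any point, and for each $\lambda$ only finitely many $\phi_i$ fail to vanish on $U_\lambda$.

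Next I would build $\bar f_0=f_0,\bar f_1,\bar f_2,\dots$ inductively. Given a smooth $H$-immersion $\bar f_{i-1}$, put $g_i=\bar f_{i-1}^*h+\phi_i^2 d\psi_i^2$ on $H$; this is a Riemannian metric on $H$, it satisfies $g_i-\bar f_{i-1}^*h=\phi_i^2 d\psi_i^2$ with $\mathrm{supp}\,\phi_i$ inside the coordinate neighbourhood $U_{\lambda(i)}$ and $d\psi_i$ of rank $1$ there, and — by the error estimate below — $g_i<g_H$, so Lemma~\ref{approximation} applies. It produces a smooth $H$-immersion $\bar f_i$, equal to $\bar f_{i-1}$ outside $U_{\lambda(i)}$ and inside the $\rho_i$-neighbourhood of $\bar f_{i-1}$, with $n_{g_H}(g_i-\bar f_i^*h)<\delta_i$ and $d_{g_0}(\bar f_{i-1},\bar f_i)<n_{g_H}(\phi_i^2 d\psi_i^2)^{1/2}+\vare_i$. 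Here $\delta_i,\vare_i,\rho_i$ are positive functions on $M$ fixed in advance so that $\sum_i\delta_i$, $\sum_i\vare_i$, $\sum_i\rho_i$ all converge, with $\sum_i\delta_i$ below both $\tfrac16\,n_{g_H}(g_H-f_0^*h)$ and $\tfrac12\inf_{v\in H_x\setminus 0}(g_H-f_0^*h)(v,v)/g_H(v,v)$, with $\sum_i\vare_i$ below $n_{g_H}(g_H-f_0^*h)^{1/2}$, and with $\sum_i\rho_i$ small enough to keep the construction inside the prescribed $C^0$-neighbourhood of $f_0$. Telescoping, with $e_i:=\bar f_i^*h-g_i$ (so $n_{g_H}(e_i)<\delta_i$) one gets $\bar f_i^*h=f_0^*h+\sum_{j\leq i}\phi_j^2 d\psi_j^2+\sum_{j\leq i}e_j$, hence, as bilinear forms on $H$, $\bar f_i^*h\leq g+\big(\sum_{j\leq i}\delta_j\big)g_H<g_H$; this is the induction step that keeps $g_i<g_H$ at every stage.

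Because each deformation is supported in $U_{\lambda(i)}$ and only finitely many $\phi_i$ are nonzero on a fixed $U_\lambda$ (and $\mathcal U$ is locally finite by the bounded-intersection hypothesis), the sequence $\{\bar f_i\}$ is eventually constant on every $U_\lambda$, hence converges to a $C^\infty$ map $f_1$ which locally equals some $\bar f_i$ and is therefore a smooth $H$-immersion with $f_1^*h\leq g+(\sum_j\delta_j)g_H<g_H$, giving (i); $\sum_i\rho_i<\infty$ keeps the concatenated homotopy from $f_0$ to $f_1$ in the given neighbourhood. For (ii): near any point $f_1=\bar f_{i_0}$ for $i_0$ large and the tail $\sum_{j>i_0}\phi_j^2 d\psi_j^2$ vanishes there, so $g-f_1^*h=-\sum_{j\leq i_0}e_j$ locally and $n_{g_H}(g-f_1^*h)<\sum_j\delta_j$; thus $n_{g_H}(g_H-f_1^*h)\leq n_{g_H}(g_H-g)+n_{g_H}(g-f_1^*h)<\tfrac12\,n_{g_H}(g_H-f_0^*h)+\sum_j\delta_j<\tfrac23\,n_{g_H}(g_H-f_0^*h)$, and $g_H-f_1^*h>0$ gives positivity. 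For (iii): by the (pointwise, locally finite) triangle inequality $d_{g_0}(f_0,f_1)\leq\sum_j d_{g_0}(\bar f_{j-1},\bar f_j)$; the $j$-th term vanishes off $U_{\lambda(j)}$, is at most $\vare_j$ where $\phi_j=0$, and otherwise is at most $n_{g_H}(\phi_j^2 d\psi_j^2)^{1/2}+\vare_j\leq n_{g_H}(g-f_0^*h)^{1/2}+\vare_j=\big(\tfrac12\,n_{g_H}(g_H-f_0^*h)\big)^{1/2}+\vare_j$, since $0\leq\phi_j^2 d\psi_j^2\leq\sum_i\phi_i^2 d\psi_i^2=g-f_0^*h$ on $H$; as at most $c(m)$ of the $\phi_j$ are nonzero at a given point and $\sum_j\vare_j\leq n_{g_H}(g_H-f_0^*h)^{1/2}$, the total is at most a constant depending only on $m$ times $n_{g_H}(g_H-f_0^*h)^{1/2}$, which is (iii).

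The main obstacle is this error bookkeeping, which must be set up before the induction begins: the functions $\delta_i,\vare_i,\rho_i$ have to be chosen so that simultaneously (a) the accumulated defect $\sum_j e_j$ never raises $\bar f_i^*h$ up to $g_H$, keeping every $g_i<g_H$ so that Lemma~\ref{approximation} stays applicable; (b) the residual $n_{g_H}(g_H-f_1^*h)$ is controlled by $n_{g_H}(g_H-g)+\sum_j\delta_j$, which stays below $\tfrac23\,n_{g_H}(g_H-f_0^*h)$; and (c) the $C^1$-displacement at each point is a sum of at most $c(m)$ main terms of size $\leq\big(\tfrac12\,n_{g_H}(g_H-f_0^*h)\big)^{1/2}$ — this is where the bounded overlap of the decomposition in Lemma~\ref{decomposition} enters — together with a convergent series of negligible $\vare_j$'s, which is what yields a purely dimensional constant. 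Everything else (convergence and smoothness of $\{\bar f_i\}$, the $H$-immersion property of $f_1$, and the $C^0$-control of the homotopy) is then formal, resting on the local finiteness built into Lemma~\ref{decomposition} and the fine-topology assertions already contained in Lemma~\ref{approximation}.
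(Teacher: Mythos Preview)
Your proof is correct and follows essentially the same route as the paper's: fix the intermediate target $g=\tfrac12(g_H+f_0^*h)$ (the paper encodes this by writing $g_H-f_0^*h=2\sum\phi_k^2d\psi_k^2$), run the Main Lemma once per term of the Nash decomposition with accumulated error folded into the target, and use local finiteness plus the $c(m)$-overlap bound to get smooth convergence together with estimates (ii) and (iii). Your bookkeeping of the errors $e_j$ and the a priori choice of $\sum\delta_j<\tfrac16\,n_{g_H}(g_H-f_0^*h)$ is exactly the mechanism the paper uses, only spelled out a bit more explicitly.
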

\begin{proof} Since $g_H-f_0^*h|_H>0$ we get a decomposition as follows:
\begin{center}$g_H-f_0^*h=2\sum_{k=1}^\infty\phi_k^2d\psi_k^2$ \ on \ $H$,\end{center}
where $\phi_k$ and $\psi_k$ are as described in Lemma~\ref{decomposition}. It further follows from the lemma that all but finitely many $\phi_i$ vanish on any $U_p$ and and at most $c(m)$ number of $\phi_i$ are non-vanishing at any point $x$. Define a sequence of Riemannian metrics on $H$ as follows: $\bar{g}_0=f_0^*h|_H$ and $\bar{g}_k=\bar{g}_{k-1}+\phi_k^2d\psi_k^2|_H$. Then each $\bar{g}_k<g_H$ and $\lim_{k\to\infty}\bar{g}_k=f_0^*h+\frac{1}{2}(g-f_0^*h)$. In successive steps we aim to increment the induced metric  on $H$ by $\phi_k^2d\psi_k^2|_H$, $k=1,2,\dots$. However, in the process of achieving this we admit an error in each step; the error in step $k$ is denoted by $\delta_k$. Thus at the end of the $k$-th step we will have a map $\bar{f}_k$ such that $\bar{f}_k^*h|_H=\bar{g}_k+\sum_{i=1}^k{\delta}_{i}$, $k=1,2,\dots$. Explicitly, we will construct a sequence of smooth maps $\{\bar{f}_k\}$, $k=1,2,\dots$, such that $\bar{f}_k^*h|_H=\bar{g}_k+\bar{\delta}_{k}$, $k=1,2,\dots$ which satisfy the following conditions:
\begin{enumerate}
\item $\bar{f}_k$ is a $g_H$-short $H$-immersion, and $\bar{f}_k=\bar{f}_{k-1}$ outside $U_k$,
\item $0\leq n_{g_H}(\bar{\delta}_k-\bar{\delta}_{k-1})<\delta'_k$,
\item $d_{g_0}(\bar{f}_{k-1},\bar{f}_k)<n_{g_H}(\bar{g}_k-\bar{g}_{k-1})^{1/2}+\vare_k$,
\item $\bar{g}_{k+1}+\bar{\delta}_{k}<g_H$.\end{enumerate}
where $\bar{f}_0=f_0$ and $\delta_0=0$ and $\sum_{k\geq 1}\vare_k<\infty$,

Taking $g=\bar{g}_1$ and $f=f_0$ in Lemma~\ref{approximation} we can prove the first step of the induction for $k=1$.
Let $\bar{f}_1^*h=\bar{g}_1+{\delta}_1$, where $\delta_1$ is such that $\bar{g}_2+\delta_1<g_H$.
Suppose we have obtained $\bar{f}_k$ satisfying (1)--(4)at the end of the $k$-th step, where we can write $\bar{f}_{k}^*h|_H=\bar{g}_{k}+\bar{\delta}_k$.
In the next step we want to increment the induced metric on $H$ by a quantity $\phi_{k+1}^2d\psi_{k+1}^2|_H$, that is, we want to induce $\bar{g}_{k+1}+\bar{\delta}_k$ on $H$. Taking $g=\bar{g}_{k+1}+\bar{\delta}_k$ and $f=\bar{f}_k$ in Lemma~\ref{approximation} we obtain a smooth map $\bar{f}_{k+1}$ which clearly satisfies (1). If we write  $\bar{f}_{k+1}^*h=\bar{g}_{k+1}+\bar{\delta}_k+\delta_{k+1}$, then $\bar{\delta}_{k+1}=\sum_{i=1}^{k+1}\delta_i$ and hence (2) and (3) are clearly satisfied. Finally, for sufficiently small choice of $\delta'_{k+1}$, we can make $\delta_{k+1}$ satisfy $\bar{g}_{k+2}+\bar{\delta}_{k+1}<g_H$. This completes the construction of $\{\bar{f}_k\}$ by induction.

Let $f_1=\lim_{k\to\infty}\bar{f}_k$. We want to show that the sequence $\{\bar{f}_k\}$ is eventually constant on any $U_\lambda$, which will imply that $f_1$ is a $C^\infty$ map. To see this take a fixed $p\in\Lambda$. It follows from Lemma~\ref{decomposition} that the set of integers
\begin{center}$\{i|\mbox{\,supp\,}\phi_i\subset U_{\lambda'} \mbox{ where } U_{\lambda'}\cap U_\lambda\neq\emptyset\}$\end{center} is finite. Let $i_\lambda$ denote the maximum element of this set. Observe that if $i>i_\lambda$ and supp $\phi_i\subset U_{\lambda'}$ then $U_\lambda\cap U_{\lambda'}=\emptyset$, so that $\phi_i$ vanishes identically on $U_\lambda$. This implies that  $\bar{f}_i=\bar{f}_{i-1}$ on $U_\lambda$ by the given construction. 
Thus, $f_1$ is a smooth map and $f_1^*h=f_0^*h+\frac{1}{2}(g-f_0^*h)+\sum_k\delta_k$. We shall prove that $f_1$ is the desired map. To see this note that
\begin{eqnarray*}n_{g_H}(g_H-f_1^*h) & = & n_{g_H}(g_H-[f_0^*h+\frac{1}{2}(g_H-f_0^*h)+\sum_k\delta_k])\\
 & = & n_{g_H}(\frac{1}{2}(g_H-f_0^*h)-\sum_k\delta_k)\\
 & \leq & \frac{1}{2}n_{g_H}(g_H-f_0^*h)+\sum_k n_{g_H}(\delta_k) \ \text{(since the sum is locally finite)}\\
 & \leq & \frac{1}{2}n_{g_H}(g_H-f_0^*h)+\sum_k \delta'_k\\
\end{eqnarray*}
We can choose $\delta'_k$ at each stage so that $\sum_k\delta'_k<\frac{1}{6}n_{g_H}(g_H-f_0^*h)$, and we obtain relation (ii). On the other hand,

\begin{eqnarray*}d_{g_0}(f_0,f_1) & \leq  & \sum_{k\geq 1} d_{g_0}(\bar{f}_{k-1},\bar{f}_{k})\\
& \leq & \sum_{k\geq 1} n_{g_H}(\bar{g}_{k}-\bar{g}_{k-1})^{1/2}+\sum_{k\geq 1}\vare_k \ \ \mbox{by (3) above}
\end{eqnarray*}
Each term of the first series on the right hand side can be estimated as follows:
\begin{eqnarray*}n_{g_H}(\bar{g}_{k+1}-\bar{g}_k)^{1/2} & \leq &
n_{g_H}(g_H-\bar{g}_k)^{1/2}\\
& = & n_{g_H}(g_H-\bar{f}_0^*h)^{1/2}.\end{eqnarray*}
However, since $\bar{g}_{k}-\bar{g}_{k-1}=\phi_{k}^2d\psi_{k}^2$ and at most $c(m)$ number of $\phi_k$ are non-vanishing at a point, the series  
$\sum_{k\geq 1} n_{g_H}(\bar{g}_{k}-\bar{g}_{k-1})^{1/2}$ is bounded above by $c(m) n_{g_H}(g_H-\bar{f}_0^*h)^{1/2}$. On the other hand, by Lemma~\ref{approximation} we are allowed to choose the sequence $\{\vare_k\}$  so that $\sum_k\vare_k<\infty$. This gives the desired relation (iii). \end{proof}

We have made all necessary preparation for the proof of Theorem~\ref{main}.
\begin{proof} \textit{of Theorem}~\ref{main}. Let $f_0$ be as in the hypothesis of the theorem. Applying Lemma~\ref{recursion} on $f_0$ recursively we can construct a sequence of $C^\infty$ maps $\{f_i:M\to N: i=1,2,\dots\}$ which has the following properties. 
\begin{enumerate}
\item $0<f_i^*h<g_H$ on $H$,
\item $0<n(g_H-f_i^*h)<\frac{2}{3}n(g_H-f_{i-1}^*h)$,
\item $d(f_{i-1},f_i)<c(m) {n(g_H-f_{i-1}^*h)}^{1/2}$ for all $i=1,2,\dots$.\end{enumerate}
It follows from (2) that the sequence of metrics $f_i^*h|_H$, $i=1,2,\dots$, coverges to $g_H$ and therefore, $\{f_i\}$ is Cauchy in the fine $C^1$ topology by (3). Hence, $\{f_i\}$ must converge to some $C^1$ map $f:M\to N$. Consequently, $\lim_{i\to \infty} f_i^*h|_H=f^*h|_H$. Thus $f^*h|_H=g_H$ and $f$ is the desired partial isometry. The homotopy between $f$ and $f_0$ is obtained by concatenating the homotopies between $f_i$ and $f_{i+1}$ for $i=1,2,\dots$.\end{proof}

\begin{rem} We have proved something stronger than what we claimed in Theorem~\ref{main}. We have proved that the partial isometries satisfy the $C^0$-dense $h$-principle in the space of $g_H$-short $H$-immersions.\end{rem}

\begin{proof}{\em of Corollary~\ref{partial isometry}}. In view of Theorem~\ref{main} it is enough to obtain a $g_H$ short $H$-immersion $f:M\to \R^n$ for $n\geq m+ k$, where $m=\dim M$ and $k=$ rank $H$. We first observe that such a map $f$ exists if $n$ is sufficiently large (see \cite{nash}). If $n\leq m+k$ then we have nothing more to show. If $n>m+k$, then we will show that there exists a vector $v\in\R^n$ such that $P_v\circ f$ is an $H$-immersion, where $P_v$ denotes the orthogonal projection of $\R^n$ onto $v^\perp$. 

We first cover $M$ by countably many open neighbouhoods $U_j$ such that $TM|_{U_j}$ is trivial. We may assume that under the trivialising map $H|_{U_j}$ sits inside $U_j\times \R^n$ as $U_j\times \R^k$. A vector $v\in \R^n$ for which $P_v\circ f$ is not an $H$-immersion on $U_j$ corresponds to a pair $(x,u)\in U_j\times \R^k$ such that $df_x(u)$ is a scalar multiple of $v$. Thus, for $v\in S^{n-1}$, $P_v\circ f$ is not an $H$-immersion on $U_j$ if and only if $v$ lies in the image of the map $F:U_j\times S^{k-1}\to S^{n-1}$ given by $(x,u)\mapsto \frac{df_x(u)}{\|df_x(u)\|}$. If $n>m+k$ then the image of this map is a set of measure zero by Sard's theorem \cite{guillemin}. Since $M$ can be covered by countably many $U_j$'s and the countable union of sets of measure zero is again a set of measure zero, we have proved that $P_v\circ f$ is an $H$-immersion for almost all $v\in \R^n$. Finally, we observe that the projection operators are length decreasing. Hence, $P_v\circ f$ is also a $g_H$ short $H$-immersion since $f$ is so.  Hence $M$ admits a $g_H$-short $H$-immersion $(M,g_H)\to (\R^n,g_{can})$ for $n\geq \dim M+$ rank $H$.\end{proof}

In the special situation, when $H$ is an integrable subbundle, we can reformulate Corollary~\ref{partial isometry} as follows.
\begin{cor} Every Riemannian manifold $(M,g_0)$ with a regular foliation $\mathcal F$ admits a $C^1$-map $f:M\to \R^n$  which restricts to an isometric immersion on each leaf of the foliation, provided $n\geq\dim M+\dim {\mathcal F}$.\label{integrable subbundle}
\end{cor}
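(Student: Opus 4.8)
\textbf{Proof proposal for Corollary~\ref{integrable subbundle}.}

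The plan is to deduce this statement directly from Corollary~\ref{partial isometry} by specialising the sub-Riemannian data to the one carried by the foliation. First I would take $H=T{\mathcal F}\subset TM$, the tangent distribution to the leaves: since ${\mathcal F}$ is a regular foliation, $H$ is a smooth subbundle of $TM$ of rank $\dim{\mathcal F}$, and it is integrable with $T{\mathcal F}=H$. Equip $H$ with the metric $g_H:=g_0|_H$ obtained by restricting the ambient Riemannian metric. Then $(H,g_H)$ is a sub-Riemannian structure on $M$ and, tautologically, $g_0$ is a Riemannian metric on $M$ with $g_0|_H=g_H$, so the hypotheses of Corollary~\ref{partial isometry} are in place.

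Next, apply Corollary~\ref{partial isometry} to $(M,H,g_H)$. Since $n\geq\dim M+\dim{\mathcal F}=\dim M+\mbox{rank}\,H$, there is a partial isometry $f:(M,H,g_H)\to(\R^n,g_{can})$, that is, a $C^1$ map with $f^*g_{can}|_H=g_H$. Recall from Section 2 that a partial isometry is in particular an $H$-immersion, so $df_x|_{H_x}$ is injective for every $x\in M$. It remains to read off what this means leaf by leaf.

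Let $L$ be a leaf of ${\mathcal F}$, with its canonical structure of immersed submanifold $\iota_L:L\hookrightarrow M$; then $T_xL=H_x$ for every $x\in L$, and since $d(\iota_L)_x$ is the inclusion $H_x\hookrightarrow T_xM$, the Riemannian metric that $L$ inherits as a submanifold of $(M,g_0)$ is $\iota_L^*g_0$, which on $T_xL=H_x$ agrees with $g_0|_{H_x}=(g_H)_x$. The composite $f\circ\iota_L:L\to\R^n$ has derivative $d(f\circ\iota_L)_x=df_x|_{T_xL}=df_x|_{H_x}$: this is injective, so $f|_L$ is a $C^1$ immersion, and for $u,v\in T_xL=H_x$ one has $(f\circ\iota_L)^*g_{can}(u,v)=(f^*g_{can})_x(u,v)=(g_H)_x(u,v)$, i.e. $f|_L$ is isometric for the leaf metric. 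Since $L$ was arbitrary, $f$ is the required map.

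There is no genuine obstacle here: the entire analytic content sits in Corollary~\ref{partial isometry} (hence ultimately in Theorem~\ref{main}), and the only thing to verify is the elementary compatibility $\iota_L^*g_0=g_H|_L$, which is immediate from $g_H=g_0|_H$ and $TL=H|_L$. The single point worth stating explicitly is that the notion of \emph{partial isometry} already builds in the monomorphism condition, so that $f$ restricts on each leaf to an honest isometric immersion rather than merely to a metric-preserving map.
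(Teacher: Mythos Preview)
Your proof is correct and follows exactly the approach the paper intends: the paper does not give a separate argument for this corollary but simply presents it as a reformulation of Corollary~\ref{partial isometry} in the special case where $H=T{\mathcal F}$ is integrable. You have spelled out the routine verification that a partial isometry for $(M,T{\mathcal F},g_0|_{T{\mathcal F}})$ restricts to an isometric immersion on each leaf, which is precisely the content the paper leaves implicit.
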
 

\begin{rem} We observed in Section 1 that a partial isometry of a sub-Riemannian manifold $(M,H,g_H)$ is also a path isometry with respect to the Carnot-Caratheodory metric $d_H$ on $M$ induced by $g_H$. Therefore, by Corollary~\ref{partial isometry} there is a path-isometry $f:(M,d_H)\to (\R^n, d_{can})$, provided $n\geq \dim M+$ rank $H$. We refer to a result in \cite[Corollary 1.5]{donne} which is of similar inerest.\end{rem}

\section{Applications of Theorem~\ref{main}}

In this section we discuss some applications of Theorem~\ref{main}. Throughout, we assume $M$ to be a closed manifold.  
First observe that if $M$ is a closed manifold and $N$ is an Euclidean space, then the hypothesis of Theorem~\ref{main} can be relaxed to conclude the existence of partial isometry. Indeed, we do not require the $g_H$-shortness condition on $f_0$; given any $H$-immersion $f_0:M\to \R^n$ we can obtain a $g_H$-short $H$-immersion $f_1$ which is of the form $\lambda f_0$, where $\lambda$ is a positive real number. Applying Theorem~\ref{main} we can then homotope $f_1$ to a partial isometry $f:M\to \R^n$. However, the resulting partial isometry cannot be made $C^0$-close to $f_0$ by this technique, since $f_1=\lambda f_0$ may not be $C^0$-close to $f_0$.

\begin{cor}(\cite{gromov}) Let $M$ be a closed manifold and $\partial_i$, $i=1,2,\dots,k$, be linearly independent vector fields on $M$. Then there exists a $C^1$ map $f:M\to\R^{k+1}$ such that $\langle \partial_if,\partial_jf\rangle=\delta_{ij}$, $1\leq i\leq j\leq k$, where $\delta_{ij}=1$ if $i=j$ and $0$ if $i< j$.\label{trivial subbundle}\end{cor}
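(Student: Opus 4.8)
\textbf{Proof proposal for Corollary~\ref{trivial subbundle}.}

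The plan is to recognize this as a direct application of Corollary~\ref{partial isometry} to a suitably chosen sub-Riemannian structure. First I would let $H\subset TM$ be the rank $k$ subbundle spanned pointwise by the linearly independent vector fields $\partial_1,\dots,\partial_k$; this is a genuine subbundle precisely because the $\partial_i$ are everywhere linearly independent. Next I would define a Riemannian metric $g_H$ on $H$ by declaring $\{\partial_1,\dots,\partial_k\}$ to be a $g_H$-orthonormal frame, i.e. $g_H(\partial_i,\partial_j)=\delta_{ij}$ (extending bilinearly over $C^\infty(M)$). This is well-defined and smooth since the $\partial_i$ form a smooth global frame for $H$. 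Thus $(M,H,g_H)$ is a sub-Riemannian manifold with $\operatorname{rank}H=k$.

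Now I would invoke Corollary~\ref{partial isometry} with $n=k+1$: since $M$ is a closed manifold of some dimension $m$, we have $\dim M + \operatorname{rank} H = m+k$, and the corollary applies whenever the target dimension is at least $m+k$. The subtlety is that we want the small target $\R^{k+1}$, which is in general much smaller than $\R^{m+k}$. This is exactly the situation addressed in the discussion preceding the corollary and in the proof of Corollary~\ref{partial isometry} itself: one first produces a $g_H$-short $H$-immersion $f_1:M\to\R^n$ for large $n$ (which exists, e.g. by scaling down a Whitney immersion, using that $M$ is closed so no $g_H$-shortness hypothesis on an initial map is needed), and then repeatedly composes with generic orthogonal projections $P_v$ onto hyperplanes. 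By the Sard-theoretic argument in that proof, $P_v\circ f_1$ remains a $g_H$-short $H$-immersion for almost every $v$ as long as the current target dimension exceeds $m+k$; one can therefore reduce the target dimension down to $k+1$ (the last admissible value, where $n=m+k$ would force $m=1$, but the strict inequality in Proposition~\ref{H-immersion} only needs $\dim N>\operatorname{rank}H=k$, so $n=k+1$ is genuinely allowed once a short $H$-immersion into $\R^{k+1}$ is in hand). Applying Theorem~\ref{main} (via Corollary~\ref{partial isometry}) to this short $H$-immersion homotopes it to a partial isometry $f:M\to\R^{k+1}$.

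Finally I would translate the conclusion $f^*h|_H=g_H$ back into the coordinate-free statement of the corollary. For each $x\in M$ and each pair $i,j$, evaluating on the frame vectors gives $h(df_x(\partial_i),df_x(\partial_j))=\langle\partial_if,\partial_jf\rangle = g_H(\partial_i,\partial_j)=\delta_{ij}$, which is exactly the asserted relation $\langle\partial_if,\partial_jf\rangle=\delta_{ij}$ for all $1\le i\le j\le k$ (symmetry of $h$ making the cases $i>j$ redundant). The main obstacle, and the only point requiring care, is the dimension reduction to $\R^{k+1}$ rather than $\R^{m+k}$: one must verify that the generic-projection argument from the proof of Corollary~\ref{partial isometry} can indeed be iterated all the way down to target dimension $k+1$, using that $H$-immersions into $\R^{k+1}$ exist (which, unlike immersions of $M$, only requires the fiber dimension inequality $k+1>k$ together with the vanishing of the relevant Stiefel-manifold obstructions for a global frame on $H$) — once a short $H$-immersion into $\R^{k+1}$ is secured, Theorem~\ref{main} delivers the partial isometry directly.
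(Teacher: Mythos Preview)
Your setup is exactly right and matches the paper: take $H$ to be the trivial subbundle spanned by the $\partial_i$, define $g_H$ by declaring this frame orthonormal, and then the equation $\langle\partial_if,\partial_jf\rangle=\delta_{ij}$ is precisely $f^*h|_H=g_H$. The endgame is also right: once you have a $g_H$-short $H$-immersion $M\to\R^{k+1}$, Theorem~\ref{main} finishes the job since $k+1>k=\operatorname{rank}H$.

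The gap is in how you produce that short $H$-immersion into $\R^{k+1}$. The Sard projection argument from the proof of Corollary~\ref{partial isometry} cannot be iterated down to $\R^{k+1}$: the map $F:U_j\times S^{k-1}\to S^{n-1}$ has domain of dimension $m+k-1$, so its image is measure-zero only while $n-1>m+k-1$, i.e.\ $n>m+k$. The projections therefore stop at $\R^{m+k}$, and your middle paragraph, which treats the reduction to $\R^{k+1}$ as a matter of ``verifying that the generic-projection argument can indeed be iterated all the way down,'' does not go through.

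The paper sidesteps this entirely and uses the one feature of the situation you only mention parenthetically at the end: $H$ is \emph{trivial}. Because the $\partial_i$ form a global frame, one can write down by hand a bundle monomorphism $H\to M\times\R^{k+1}=T\R^{k+1}$ (send $\partial_i$ to $e_i$), extend arbitrarily to $TM$, and then invoke Proposition~\ref{H-immersion} (the $h$-principle for $H$-immersions, which only needs $\dim N>\operatorname{rank}H$) to get a genuine $H$-immersion $M\to\R^{k+1}$. Closedness of $M$ then allows a global rescaling to make it $g_H$-short, and Theorem~\ref{main} applies. So the route through Corollary~\ref{partial isometry} and projections should be replaced by a direct appeal to Proposition~\ref{H-immersion}; that is the paper's argument, and it is what your final parenthetical remark is groping toward.
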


\begin{proof} Let $H$ be the (trivial) subbundle of $TM$ spanned by the vector fields $\partial_i$, $i=1,2,\dots,k$. Define a Riemannian metric $g_H$ on $H$ by the relations
\begin{center}$g_H(\partial_i,\partial_j)=0$ if $i\neq j$ and $g_H(\partial_i,\partial_i)=1$\end{center}
for $i,j=1,\dots,k$. Consider the triple $(M,H,g_H)$ as defined above. Since $H$ is trivial, Proposition~\ref{H-immersion} guarantees the existence of an $H$-immersion $M\to \R^{k+1}$ which can be scaled appropriately in order to get a strictly $g_H$ short $H$-immersion, since the manifold $M$ is closed. Hence by Theorem~\ref{main} there exists a $C^1$ partial isometry $f:(M, g_H)\to (\R^{k+1}, g_{can})$. This means that $\langle \partial_if,\partial_jf\rangle=g_H\langle \partial_i,\partial_j\rangle$ and the proof is now complete.\end{proof}

\begin{rem} A more general form of the above result is in fact true. Let $\Sigma(k,\R)$ denote the set of all positive definite symmetric matrices over reals and let $g:M\to \Sigma(k,\R)$ be any smooth map. Then $g$ can be realised as the matrix $(\langle\partial_i f,\partial_j f\rangle)_{i,j}$ for some $C^1$-function $f:M\to \R^{k+1}$ provided $M$ is a closed manifold.\end{rem}

Gromov observed in \cite{gromov} that if we have $k=1$ in Corollary~\ref{trivial subbundle} then we can actually obtain $C^\infty$  partial isometries. We here give a direct proof of this result without going into the convex integration theory. 
\begin{thm} If $M$ is a closed manifold and $X$ is a smooth nowhere vanishing vector field on $M$, then there exists a $C^\infty$-map $f:M\to\R^2$ such that $\langle Xf,Xf\rangle=1$. \label{smooth partial isometry}\end{thm}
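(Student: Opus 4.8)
The plan is to construct $f$ explicitly rather than via convex integration, exploiting the fact that $H$ is a rank $1$ trivial bundle spanned by $X$. The condition $\langle Xf, Xf\rangle = 1$ says that along each integral curve of $X$ the map $f:M\to\R^2$ has unit speed with respect to the parameter provided by $X$. Since $M$ is closed and $X$ is nowhere vanishing, the flow $\varphi_t$ of $X$ is complete. The idea is to prescribe $f$ on a hypersurface transverse to $X$ and then propagate along the flow so that the speed is exactly $1$; since $\R^2$ has a free isometric rotation action, a unit-speed curve in $\R^2$ is determined by its curvature function, which we are free to choose.

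First I would reduce to a normal-form statement: near any point of $M$ one has flow-box coordinates $(s, y_1,\dots,y_{m-1})$ in which $X = \partial/\partial s$, and the equation becomes $|\partial f/\partial s|^2 = 1$. Thus locally the problem is: given a smooth curve-valued initial condition $y\mapsto f(0,y)\in\R^2$ and a smooth initial ``velocity direction'' $y\mapsto u(0,y)\in S^1$, solve $\partial f/\partial s = u(s,y)$ with $u$ taking values in $S^1$; writing $u = (\cos\theta, \sin\theta)$, this is just $\partial f/\partial s = (\cos\theta(s,y),\sin\theta(s,y))$ for an \emph{arbitrary} smooth function $\theta$. So locally there is enormous flexibility. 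The real content is to globalize this: the integral curves of $X$ may be non-closed (dense on a torus, say) or closed with varying periods, so one cannot simply integrate from a global transversal.

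The key step, and the main obstacle, is therefore the global construction. Here is the route I would take. Pick a smooth Riemannian metric on $M$ and let $\nabla$ be the associated ``arc-length reparametrization'': since $M$ is compact, rescale $X$ if necessary so that matters are uniform. Choose a finite cover of $M$ by flow boxes $V_1,\dots,V_p$ and a subordinate partition of unity. On each flow box solve the ODE $\partial f_\alpha/\partial s = (\cos\theta_\alpha, \sin\theta_\alpha)$ — trivial to do — but the local solutions will not agree on overlaps. To fix this, I would instead take the \emph{angle function} $\theta$ as the primary unknown: the quantity $\theta$ is only defined up to the ambiguity of lifting a map to $S^1$, but its $s$-derivative $\kappa = \partial\theta/\partial s$ (the ``curvature'') is a genuine function on $M$, and conversely, specifying a smooth function $\kappa$ on $M$ together with one initial angle $\theta_0$ on a local transversal determines $\theta$ along the nearby flow, hence determines $f$ up to a Euclidean motion. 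The task becomes: choose $\kappa\in C^\infty(M)$ and then show the resulting ``developing map'' to $\R^2$ (obtained by integrating $e^{i\theta}$ along flow lines, with $\theta$ integrating $\kappa$) is globally well-defined and single-valued. Single-valuedness is automatic if we work on the universal cover or, better, if we argue that the construction only requires integrating along flow lines and the compactness of $M$ lets us patch finitely many pieces using a Mayer–Vietoris / successive-extension argument: extend $f$ from $V_1\cup\dots\cup V_{\alpha}$ to $V_1\cup\dots\cup V_{\alpha+1}$, adjusting by a smooth family of rotations of $\R^2$ supported where the two agree. Because at each stage we are free to modify $\kappa$ on the new piece, and because the flow lines entering $V_{\alpha+1}$ from the already-covered region carry a prescribed $f$-value and $f$-velocity, we only need to interpolate smoothly between prescribed boundary data for an equation of the form $\partial f/\partial s = e^{i\theta}$, which is always solvable: given $f$ and $\partial f/\partial s$ of unit length at $s=0$ and at $s=s_1$, a unit-speed curve joining these jets exists provided $s_1$ exceeds the Euclidean distance between the endpoints (reparametrize and add a small detour), and compactness gives a uniform lower bound on $s_1$ after possibly refining the cover. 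I expect the patching — ensuring the successive rotational adjustments can be made smoothly in the transverse variables and do not destroy the unit-speed condition already achieved — to be the technical heart of the argument; once it is set up, everything else is the routine ODE observation that unit-speed curves in the plane are unobstructed.
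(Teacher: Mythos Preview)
Your proposal has a genuine gap in the patching step. You correctly identify that integrating $e^{i\theta}$ along flow lines gives local solutions, but the global patching you outline does not work as stated. When you extend $f$ from $V_1\cup\dots\cup V_\alpha$ into $V_{\alpha+1}$, a single flow line may enter and leave $V_{\alpha+1}$ several times, imposing boundary data at both ends of each interior segment; you need the flow-time $s_1$ along such a segment to exceed the Euclidean distance between the already-prescribed $f$-values at its endpoints, but those values were fixed by the earlier steps and nothing in your construction bounds them. Refining the cover makes $s_1$ \emph{smaller}, so it cannot rescue this. The suggested ``smooth family of rotations of $\R^2$'' is also problematic: if $R:M\to SO(2)$ varies along a flow line then $X(R\cdot f)=(XR)\cdot f + R\cdot Xf$ and the first term destroys the unit-speed condition, while if $R$ is constant along flow lines and the flow has a dense orbit then $R$ is globally constant and achieves nothing.

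The paper avoids all of this by \emph{linearizing before patching}. One first takes a short $H$-immersion $f_0:M\to\R^2$ (existence from the $h$-principle for $H$-immersions, then scale using compactness so that $\phi^2:=\langle Xf_0,Xf_0\rangle<1$), and seeks $f=f_0+\alpha$. Imposing the overdetermined pair $\langle Xf_0,X\alpha\rangle=0$ and $\langle X\alpha,X\alpha\rangle=1-\phi^2$ forces, in $\R^2$, that $X\alpha=\beta$ with $\beta=\dfrac{\sqrt{1-\phi^2}}{\|Xf_0\|}\,\rho_{\pi/2}(Xf_0)$ an explicit smooth map. The remaining equation $X\alpha=\beta$ is \emph{linear}, and $X$ admits a global right inverse assembled by a partition of unity: on a flow box where $X=\psi\,\partial_t$ set $M_\mu\gamma=\int_0^t \gamma/\psi\,dt$, and define $M\gamma=\sum_\mu M_\mu(\phi_\mu\gamma)$; then $X(M\gamma)=\sum_\mu \phi_\mu\gamma=\gamma$ since $X$ passes through the sum. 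Linearity is precisely what makes the partition-of-unity glue trivial; your nonlinear patching has no substitute for it.
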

\begin{proof}{\em of Theorem~\ref{smooth partial isometry}} Let $X$ be a smooth vector field on $M$ which is nowhere vanishing. We need to solve the equation $<Xf,Xf>=1$, for smooth functions $f:M\to \R^2$. Let $H$ denote the 1-dimensional (integrable) distribution on $M$ determined by $X$. By Proposition~\ref{H-immersion} there exists an $H$-immersion $f_0:M\to \R^2$ which implies that $Xf_0$ is a nowhere vanishing function on $M$. Since $M$ is a closed manifold, without loss of generality we may assume that $0<\langle Xf_0,Xf_0\rangle=\phi^2<1$. This condition means that $f$ is $g_H$-short if we define $g_H$ by $g_H(X,X)=1$. Consider the equation $\langle X(f_0+\alpha),X(f_0+\alpha)\rangle=1$, where $\alpha:M\to \R^2$ is a smooth map. This reduces to
\begin{equation} \langle X\alpha,X\alpha\rangle+ 2\langle Xf_0,X\alpha\rangle=1-\phi^2.
\end{equation}
 We split this into a system of two equations as follows:
\begin{equation} \langle Xf_0,X\alpha\rangle=0\ \ \ \ \langle X\alpha,X\alpha\rangle=1-\phi^2.\end{equation}
Now note that $\beta=\frac{\sqrt{1-\phi^2}}{\|Xf_0\|}\rho_{\pi/2}\circ (Xf_0)$ is a formal solution of the above system, where $\rho_{\pi/2}$ is the rotation on $\R^2$ through the angle $\pi/2$ in the anticlockwise direction. Hence the problem of finding the desired $f$ reduces to solving the equation $X\alpha=\beta$, where $\beta:M\to\R^2$ is a nowhere vanishing smooth function.

The vector field $X$ can be considered as a first order linear differential operator on $C^\infty(M,\R^2)$. We will define a differential operator $M:C^\infty(M,\R^2)\to C^\infty(M,\R^2)$ such that $X(M(\beta))=\beta$ for all $\beta\in C^\infty(M,\R^2)$. We first observe that this problem is local and therefore it is enough to define local inversion operators on open sets around each point \cite[2.3.8]{gromov}. To see this let $\{U_\mu\}$ be a locally finite open covering of $M$ by coordinated neighbourhoods on each of which we have a local inversion $M_\mu$ of $X$. Define $M$ by $M\beta=\sum_\mu M_\mu(\phi_\mu \beta)$ where $\{\phi_\mu\}$ is a partition of unity subordinate to the open covering $\{U_\mu\}$. Then $M$ is a global inversion operator since

\begin{center}$\begin{array}{rcll} X(M\beta) & = & \sum_\mu X(M_\mu(\phi_\mu \beta)) & \mbox{(since the sum is locally finite)}\\
& = & \sum_\mu \phi_\mu\beta & \mbox{(since } \phi_\mu\beta\mbox{ is supported on } U_\mu)\\
& = & \beta\end{array}$\end{center}

It now remains to prove the local existence of an inversion $M$ of $X$. Recall that the distribution $H$ is integrable so that $M$ is foliated by integral curves of $H$. Indeed around each point of $M$ there exists a coordinate system $(U,(x,t))$ such that $U$ is homeomorphic to $\R^{n-1}\times \R$ and $\frac{\partial}{\partial t}$ is tangent to $H$, so that $X$ can be expressed as $\psi(x,t)\frac{\partial}{\partial t}$ on $U$. Therefore, the problem reduces to solving the equation $\psi(x,t)\frac{\partial\alpha}{\partial t}=\beta(x,t)$. Since $\psi(x,t)$ is nowhere vanishing we can define $M\beta$ by $\int_0^t\beta(x,t)/\psi(x,t)\,dt$. This completes the proof.\end{proof}

\begin{rem} Possibly we do not require the closedness condition on $M$ in the above two results (see \cite{gromov}).\end{rem}

We end this section with an example of $C^\infty$ partial isometry which supports the above theorem.
\begin{ex} {\em Let $\psi:\R^2\to \R^3$ be the smooth immersion defined by \[\psi(\theta,\phi)=((b+a\cos\theta)\cos\phi,(b+a\cos\theta)\cos\phi, a\sin\theta),\]
where $(\theta,\phi)\in\R^2$ and  $a$, $b$ are two real numbers with $0<a<b$. 
The image of $\psi$ is a 2-torus $\T^2$ which is a manifold with local parametrisations defined by $\psi$. Let $g$ denote the flat metric on $\T^2$ induced by $\psi$. 
 
For any real number $\alpha$, we have a 1-dimensional foliation of $\R\times \R$ by lines of slope $\alpha$. Let $\mathcal F_\alpha$ denote the image of this foliation on $\T^2$ under $\psi$, and $H_\alpha$ the corresponding 1-dimensional distribution on $\T^2$. Let $g_\alpha$ be the restriction of $g$ to $H_\alpha$. Consider the map $f:\T^2\to\R^2$ defined by $f(x,y,z)=(x,y)$. It is easy to see that $f$ is $g_\alpha$-short $H_\alpha$-immersion. In particular, if $\alpha=0$ then $f_0$ itself is a partial isometry.}
\end{ex}

\end{document}